\documentclass[11pt]{amsart}
\usepackage{amssymb}
\usepackage{amsmath}
\usepackage[active]{srcltx}
\usepackage{t1enc}
\usepackage[latin2]{inputenc}
\usepackage{verbatim}
\usepackage{amsmath,amsfonts,amssymb,amsthm}
\usepackage[mathcal]{eucal}
\usepackage{enumerate}
\usepackage[centertags]{amsmath}
\usepackage{graphics}

\setcounter{MaxMatrixCols}{10}

\newtheorem{theorem}{Theorem}

\newtheorem{lemma}{Lemma}
\newtheorem{remark}{Remark}

\newtheorem{corollary}{Corollary}

\newtheorem*{theoremW}{Theorem W}
\email{blahota@nyf.hu}
\email{nkaroly@nyf.hu}
\email{larserik@ltu.se}
\email{giorgitephnadze@gmail.com}
\thanks{Research was supported by project TÁMOP-4.2.2.A-11/1/KONV-2012-0051,
by Shota Rustaveli National Science Foundation grants DI/9/5-100/13, DO/24/5-100/14, YS15-2.1.1-47  and by a Swedish Institute scholarship no. 10374-2015.}

\begin{document}
\author{I. Blahota, K. Nagy, L.E. Persson and G. Tephnadze}
\title[partial sums]{A sharp boundedness result for restricted maximal
operators of Vilenkin-Fourier series on martingale Hardy spaces}
\address{I. Blahota, Institute of Mathematics and Computer Sciences, College
of Ny\'\i regyh\'aza, P.O. Box 166, Ny\'\i regyh\'aza, H-4400, Hungary.}
\address{K. Nagy, Institute of Mathematics and Computer Sciences, College of
Ny\'\i regyh\'aza, P.O. Box 166, Ny\'\i r\-egyh\'aza, H-4400 Hungary,}
\address{L.E. Persson, Department of Engineering Sciences and Mathematics,
Lule\aa\ University of Technology, SE-971 87 Lule\aa, Sweden and UiT The Arctic University of Norway, P.O. Box 385, N-8505, Narvik, Norway.}
\address{G. Tephnadze, Department of Mathematics, Faculty of Exact and
Natural Sciences, Ivane Javakhishvili Tbilisi State University, Chavchavadze
str. 1, Tbilisi 0128, Georgia, \&
University of Georgia, IV, 77a Merab Kostava St, Tbilisi, 0128,  Georgia, \& Department of Engineering Sciences and Mathematics, Lule\aa University of Technology, SE-971 87 Lule\aa, Sweden.}
\date{}

\begin{abstract}
The restricted maximal operators of partial sums with respect to bounded
Vilenkin systems are investigated. We derive the maximal subspace of
positive numbers, for which this operator is bounded from the Hardy space $%
H_{p}$ to the Lebesgue space $L_{p}$ for all $0<p\leq 1.$ We also prove that
the result is sharp in a particular sense.
\end{abstract}

\maketitle

\textbf{2010 Mathematics Subject Classification.} 42C10, 42B25.

\textbf{Key words and phrases:} Vilenkin system, partial sums, maximal
operator, Vilenkin-Fourier series, martingale Hardy space.

\section{ Introduction}

Pointwise convergence problems are of fundamental importance in Fourier
analysis, and as it is well known they are closely related to studying
boundedness of associated maximal operators. In the present paper we will
deal with maximal operators. Let
us first recall in brief a historical development of this theory.

It is well-known (for details see e.g. \cite{AVD} and \cite{gol}) that
Vilenkin systems do not form bases in the space $L_{1}\left( G_{m}\right) .$
Moreover, (for details see e.g. \cite{We1,We2}) there is a function in the
martingale Hardy space $H_{1}\left( G_{m}\right) ,$ such that the partial
sums of $f$ are not bounded in $L_{1}\left( G_{m}\right) $-norm, but Watari \cite{Wat} (see also Gosselin \cite{goles} and Young \cite{Yo}) proved that there exist absolute constants $ c $ and $ c_p $ such that, for $ n =1,2,..., $
\begin{eqnarray*}
\left\Vert S_{n}f\right\Vert
_{p}&\leq & c_p\left\Vert f\right\Vert
_{p}, \ \ f\in L_p(G_m), \ \ 1<p<\infty, \\ 
\sup_{\lambda >0} \lambda \mu
\left( \vert S_n f\vert>\lambda \right)&\leq& c\left\Vert f\right\Vert_{1}, \ \ \ f\in L_1(G_m), \ \ \lambda>0. 
\end{eqnarray*}
In \cite{tep7} it was proved that there exists a martingale $f\in H_{p}\left(
G_{m}\right) \left( 0<p<1\right) ,$ such that 
\begin{equation*}
\underset{n\in \mathbb{N}}{\sup }\left\Vert S_{M_{n}+1}f\right\Vert
_{L_{p,\infty }}=\infty .
\end{equation*}
The reason of divergence of $S_{M_{n}+1}f$ is that the Fourier coefficients
of $f\in H_{p}\left( G_{m}\right) $ ($0<p<1$) are not bounded (see Tephnadze 
\cite{tep2}).

Uniform and point-wise convergence and some approximation properties of the
partial sums in $L_{1}\left( G_{m}\right) $ norms were investigated by
Goginava \cite{gog1,gog2} and Avdispahi\'c, Memi\'c \cite{am}. Some related
results can also be found in the recent PhD thesis by Tephnadze \cite%
{tepthesis}. Moreover, Fine \cite{fi} obtained sufficient condition for the
uniform convergence it is in complete analogy with the Dini-Lipschitz
condition. Guličev \cite{9} estimated the rate of uniform convergence of a
Walsh-Fourier series using Lebesgue constants and modulus of continuity.
Uniform convergence of subsequences of partial sums was also investigated by
Goginava and Tkebuchava \cite{gt}. This problem was considered for the
Vilenkin group $G_{m}$ by Fridli \cite{4}, Blahota \cite{2} and G\'at \cite%
{5}.

In \cite{tep7} the following maximal operator was considered: 
\begin{equation*}
\widetilde{S}_{p}^{\ast }f:=\sup_{n\in \mathbb{N}}\frac{\left\vert
S_{n}f\right\vert }{\left( n+1\right) ^{1/p-1}\log ^{\left[ p\right] }\left(
n+1\right) },\quad 0<p\leq 1
\end{equation*}
(where $\left[ x\right] $ denotes integer part of $x$). It was proved that
the maximal operator $\widetilde{S}_{p}^{\ast }$ is bounded from the Hardy
space $H_{p}\left( G_{m}\right)$ to the space $L_{p}\left( G_{m}\right) .$
Moreover, if $0<p\leq 1$ and $\varphi :\mathbb{N}_{+}\rightarrow \lbrack
1,\infty )$ is a non-decreasing function satisfying the condition

\begin{equation*}
\overline{\lim_{n\rightarrow \infty }}\frac{\left( n+1\right) ^{1/p-1}\log ^{%
\left[ p\right] }\left( n+1\right) }{\varphi \left( n\right) }=+\infty ,
\end{equation*}%
then 
\begin{equation*}
\sup_{n\in \mathbb{N}}\left\Vert \frac{S_{n}f}{\varphi \left( n\right) }%
\right\Vert _{L_{p,\infty }\left( G_{m}\right) }=\infty ,\text{ for }0<p<1,
\end{equation*}%
and%
\begin{equation*}
\sup_{n\in \mathbb{N}}\left\Vert \frac{S_{n}f}{\varphi \left( n\right) }%
\right\Vert _{1}=\infty .
\end{equation*}

It is also known (for details see e.g. Weisz \cite{We2}) that 
\begin{equation*}
\left\Vert S_{n_{k}}f\right\Vert _{1}\leq c\left\Vert f\right\Vert _{1}
\end{equation*}%
holds if and only if 
\begin{equation*}
\sup_{k\in \mathbb{N}}\left\Vert D_{n_{k}}\right\Vert _{1}<c<\infty ,
\end{equation*}%
where $D_{n_{k}}$ denotes the $n_{k}$th Dirichlet kernel with respect to
Vilenkin system. Moreover, the corresponding subsequence $S_{n_k}$ of the
partial sums $S_{{n}}$ are bounded from the Hardy space $H_{p}\left(
G_{m}\right) $ to the Hardy space $H_{p}\left( G_{m}\right) ,$ for all $p>0.$

It is also well-known (for details see e.g. Weisz \cite{We2} and Tephnadze 
\cite{tepthesis}) that the following restricted maximal operator 
\begin{equation*}
{S}^{\#}f:=\sup_{n\in \mathbb{N}}\left\vert S_{M_{n}}f\right\vert
\end{equation*}%
is bounded from the martingale Hardy space $H_{p}\left( G_{m}\right) $ to
the Lebesgue space $L_{p}\left( G_{m}\right) ,$ for all $p>0$.

In this paper we find the maximal subspace of positive numbers, for which
the restricted maximal operator of partial sums with respect to Vilenkin
systems in this subspace is bounded from the Hardy space $H_{p}$ to the Lebesgue space $L_{p}$ for all $0<p\leq 1.$ As applications, both some well-known and
new results are pointed out.

The paper is organized as follows: Some Preliminaries (definitions,
notations and basic facts) are presented in Section 2. The main result
(Theorem \ref{theorem4.4}) and some of its consequences (Corollaries \ref%
{corollary4.45}-\ref{corollaryae9}) are presented and discussed in Section
3. Theorem \ref{theorem4.4} is proved in Section 5. For this proof we need
some Lemmas, one of them is new and of independent interest (see Section 4).

\section{Preliminaries}

Let $\mathbb{N}_{+}$ denote the set of the positive integers, $\mathbb{N}:=%
\mathbb{N}_{+}\cup \{0\}$ and assume that $m:=(m_{0},m_{1},\ldots)$ is a
sequence of positive integers not less than 2.

Denote by 
\begin{equation*}
Z_{m_{k}}:=\{0,1,\ldots,m_{k}-1\}
\end{equation*}%
the additive group of integers modulo $m_{k}.$

Define the group $G_{m}$ as the complete direct product of the group $%
Z_{m_{k}}$ with the product of the discrete topologies of $Z_{m_{k}}$`s.

The product measure $\mu $ of the measures 
\begin{equation*}
\mu _{k}\left( \{j\}\right) :=1/m_{k} \quad (j\in Z_{m_{k}})
\end{equation*}
is a Haar measure on $G_{m}$ with $\mu \left( G_{m}\right) =1$.

If the sequence $m:=(m_{0},m_{1},\ldots)$ is bounded, then $G_{m}$ is called
a bounded Vilenkin group, otherwise it is called an unbounded one. In the
present paper we deal only with bounded Vilenkin groups.

The elements of $G_{m}$ are represented by sequences 
\begin{equation*}
x:=(x_{0},x_{1},\ldots,x_{k},\ldots)\quad \left( x_{k}\in Z_{m_{k}}\right) .
\end{equation*}

A base for the neighbourhood of $G_{m}$ can be given as follows: 
\begin{equation*}
I_{0}\left( x\right) :=G_{m},
\end{equation*}%
\begin{equation*}
I_{n}(x):=\{y\in G_{m}\mid y_{0}=x_{0},\ldots,y_{n-1}=x_{n-1}\}\quad (x\in
G_{m},\ n\in \mathbb{N}).
\end{equation*}%
Denote $I_{n}:=I_{n}\left( 0\right) $ for $n\in \mathbb{N}$ and $\overline{%
I_{n}}:=G_{m}\backslash I_{n}$.

It is evident that 
\begin{equation}  \label{1.1}
\overline{I_{N}}=\overset{N-1}{\underset{s=0}{\bigcup }}I_{s}\backslash
I_{s+1}.
\end{equation}

The generalized number system based on $m$ is defined in the following way 
\begin{equation*}
M_{0}:=1,{\ \quad }M_{k+1}:=m_{k}M_{k} \quad (k\in \mathbb{N}),
\end{equation*}%
Every $n\in \mathbb{N}$ can be uniquely expressed as%
\begin{equation*}
n=\sum_{j=0}^\infty n_{j}M_{j},\quad\text{ where }\quad n_{j}\in Z_{m_{j}}\
(j\in \mathbb{N})
\end{equation*}
and only a finite number of $n_{j}$`s differ from zero.

Let 
\begin{equation*}
\left\langle n\right\rangle :=\min \{j\in \mathbb{N}:n_{j}\neq 0\}\text{ \
and \ }\left\vert n\right\vert :=\max \{j\in \mathbb{N}:n_{j}\neq 0\},
\end{equation*}%
that is $M_{\left\vert n\right\vert }\leq n\leq M_{\left\vert n\right\vert
+1}.$ Set 
\begin{equation*}
\rho \left( n\right) :=\left\vert n\right\vert -\left\langle n\right\rangle ,%
\text{ \ for \ all \ \ }n\in \mathbb{N}.
\end{equation*}

The norm (or quasi-norm) of the space $L_{p}(G_{m})$ is defined by 
\begin{equation*}
\left\Vert f\right\Vert _{p}:=\left( \int_{G_{m}}\left\vert f\right\vert
^{p}d\mu \right) ^{1/p}\quad \left( 0<p<\infty \right) .
\end{equation*}%
The space $L_{p,\infty }\left( G_{m}\right) $ consists of all measurable
functions $f$ for which

\begin{equation*}
\left\Vert f\right\Vert _{L_{p},\infty }:=\sup_{\lambda >0} \lambda \mu
\left( f>\lambda \right)^{1/p}<+\infty .
\end{equation*}

Next, we introduce on $G_{m}$ an orthonormal system which is called Vilenkin
system.

First, we define the complex valued function $r_{k}\left( x\right) \colon
G_{m}\rightarrow \mathbb{C},$ the generalized Rademacher functions as 
\begin{equation*}
r_{k}\left( x\right) :=\exp \left( 2\pi \imath x_{k}/m_{k}\right) \quad
\left( \imath ^{2}=-1,\ x\in G_{m},\ k\in \mathbb{N}\right) .
\end{equation*}

Let us define the Vilenkin system $\psi :=(\psi _{n}:n\in \mathbb{N})$ on $%
G_{m} $ as: 
\begin{equation*}
\psi _{n}(x):=\prod_{k=0}^\infty r_{k}^{n_{k}}\left( x\right) \quad \left(
n\in \mathbb{N}\right) .
\end{equation*}

Specifically, we call this system the Walsh-Paley one if $m\equiv 2$.

The Vilenkin system is orthonormal and complete in $L_{2}\left( G_{m}\right)$
(see e.g. \cite{AVD,Vi}).

Now, we present the usual definitions in Fourier analysis.

If $f\in L_{1}\left( G_{m}\right) $ we can establish Fourier coefficients,
the partial sums of Fourier series, Dirichlet kernels with respect to the
Vilenkin system in the usual manner: 
\begin{eqnarray*}
\widehat{f}\left( k\right) &:=&\int_{G_{m}}f\overline{\psi }_{k}d\mu
,\quad\left( k\in \mathbb{N}\right) , \\
S_{n}f &:=&\sum_{k=0}^{n-1}\widehat{f}\left( k\right) \psi _{k},\quad \left(
n\in \mathbb{N}_{+},\ S_{0}f:=0\right) , \\
D_{n} &:=&\sum_{k=0}^{n-1}\psi _{k},\quad\left( n\in \mathbb{N}_{+}\right) .
\end{eqnarray*}

Recall that (see \cite{AVD}) 
\begin{equation}  \label{dn2.3}
\hspace*{0in}D_{M_{n}}\left( x\right) =\left\{ 
\begin{array}{ll}
M_{n}, & \text{ if \ }x\in I_{n}, \\ 
0, & \text{ if \ }x\notin I_{n},%
\end{array}%
\right.
\end{equation}%
and 
\begin{equation}  \label{dn2.2}
D_{n}\left( x\right) =\psi _{n}(x)\left( \underset{j=0}{\overset{\infty }{%
\sum }}D_{M_{j}}\left( x\right) \overset{m_{j}-1}{\underset{u=m_{j}-n_{j}}{%
\sum }}r_{j}^{u}\left( x\right) \right) .
\end{equation}

The $\sigma $-algebra generated by the intervals $\left\{ I_{n}\left(
x\right) :x\in G_{m}\right\} $ will be denoted by $\digamma _{n}$ $\left(
n\in \mathbb{N}\right) $. Let us denote a martingale with respect to $%
\digamma _{n}\left( n\in \mathbb{N}\right)$ by $f=\left( f_{n}:n\in \mathbb{N%
}\right) $ (for details see e.g. \cite{We1}). The maximal function of a
martingale $f$ is defined by 
\begin{equation*}
f^{\ast }=\sup_{n\in \mathbb{N}}\left\vert f^{\left( n\right) }\right\vert.
\end{equation*}

In the case $f\in L_{1}\left( G_{m}\right) ,$ the maximal function is also
given by 
\begin{equation*}
f^{\ast }\left( x\right) =\sup_{n\in \mathbb{N}}\frac{1}{\mu \left(
I_{n}\left( x\right) \right) }\left\vert \int_{I_{n}\left( x\right) }f\left(
u\right) d\mu \left( u\right) \right\vert
\end{equation*}

For $0<p<\infty $ the Hardy martingale spaces $H_{p}\left( G_{m}\right) $
consist of all martingales, for which 
\begin{equation*}
\left\Vert f\right\Vert _{H_{p}}:=\left\Vert f^{\ast }\right\Vert
_{p}<\infty .
\end{equation*}

A bounded measurable function $a$ is a $p$-atom, if there exists an interval 
$I$, such that%
\begin{equation*}
\int_{I}a d\mu =0,\quad\left\Vert a\right\Vert_{\infty }\leq \mu \left(
I\right)^{-1/p},\quad \text{supp}\left( a\right) \subseteq I.
\end{equation*}

The Hardy martingale spaces $H_{p}\left( G_{m}\right) $ have an atomic
characterization for $0<p\leq 1$. In fact the following theorem is true (see
e.g. Weisz \cite{We1,We2}):

\begin{theoremW}
A martingale $f=\left( f_{n}:n\in \mathbb{N}\right) \in H_{p}\left(
G_{m}\right)$ $\left( 0<p\leq 1\right) $ if and only if there exists a
sequence $\left( a_{k}:\ k\in \mathbb{N}\right) $ of $p$-atoms and a
sequence $\left( \mu _{k}:\ k\in \mathbb{N}\right) $ of real numbers, such
that for every $n\in \mathbb{N},$ 
\begin{equation}  \label{2A}
\sum_{k=0}^{\infty }\mu _{k}S_{M_{n}}a_{k}=f_{n}
\end{equation}%
and 
\begin{equation*}
\sum_{k=0}^{\infty }\left\vert \mu _{k}\right\vert^{p}<\infty .
\end{equation*}

Moreover, 
\begin{equation*}
\left\Vert f\right\Vert _{H_{p}}\backsim \inf \left( \sum_{k=0}^{\infty
}\left\vert \mu _{k}\right\vert^{p}\right)^{1/p},
\end{equation*}
where the infimum is taken over all decomposition of $f$ of the form %
\eqref{2A}.
\end{theoremW}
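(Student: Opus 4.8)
The plan is to establish the two implications separately, each with its quantitative norm bound; the easy direction is the boundedness of a single atom in $H_{p}$, while the substantial part is the construction of an atomic decomposition out of an abstract martingale via a stopping-time (martingale Calder\'on--Zygmund) argument.

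For the sufficiency, together with the estimate $\left\Vert f\right\Vert_{H_{p}}\leq c\left(\sum_{k}|\mu_{k}|^{p}\right)^{1/p}$, the key observation is that by \eqref{dn2.3} the operator $S_{M_{n}}$ is exactly the conditional expectation relative to $\digamma_{n}$, that is
\begin{equation*}
S_{M_{n}}g(x)=\frac{1}{\mu(I_{n}(x))}\int_{I_{n}(x)}g\,d\mu .
\end{equation*}
Let $a$ be a $p$-atom with $\mathrm{supp}(a)\subseteq I=I_{N}$, $\int_{I}a\,d\mu=0$ and $\Vert a\Vert_{\infty}\leq\mu(I)^{-1/p}$. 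I would show that $a^{\ast}=\sup_{n}|S_{M_{n}}a|$ vanishes off $I$: whenever $I_{n}(x)\supseteq I$ the cancellation forces $S_{M_{n}}a(x)=0$, whenever $I_{n}(x)\cap I=\emptyset$ the value is trivially $0$, and the only surviving case is $x\in I$. Since the conditional expectation contracts the sup-norm, $\Vert a^{\ast}\Vert_{\infty}\leq\Vert a\Vert_{\infty}\leq\mu(I)^{-1/p}$, whence $\Vert a\Vert_{H_{p}}^{p}\leq\mu(I)\mu(I)^{-1}=1$. Given a decomposition \eqref{2A}, I would then pass to $f^{\ast}\leq\sum_{k}|\mu_{k}|\,a_{k}^{\ast}$ and use the $p$-subadditivity of $\Vert\cdot\Vert_{p}^{p}$ for $0<p\leq1$ to get $\Vert f\Vert_{H_{p}}^{p}\leq\sum_{k}|\mu_{k}|^{p}$, and finally take the infimum.

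For the necessity and the reverse estimate, I would fix $f\in H_{p}$ and, for each $k\in\mathbb{Z}$, introduce the stopping time $\tau_{k}:=\inf\{n\in\mathbb{N}:|f_{n}|>2^{k}\}$, so that $\{\tau_{k}<\infty\}=\{f^{\ast}>2^{k}\}$. Writing $f_{n}^{\tau}:=f_{\tau\wedge n}$ for the stopped martingale, a telescoping (plus a routine convergence argument) gives $f=\sum_{k\in\mathbb{Z}}\left(f^{\tau_{k+1}}-f^{\tau_{k}}\right)$. Each block $g^{k}:=f^{\tau_{k+1}}-f^{\tau_{k}}$ vanishes off $\{f^{\ast}>2^{k}\}$, and this region is a disjoint union of the stopping intervals $\{I_{k,j}\}_{j}$; splitting $g^{k}$ accordingly yields pieces $g^{k,j}$ supported on the single interval $I_{k,j}$, with $\int_{I_{k,j}}g^{k,j}\,d\mu=0$ by optional sampling. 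Normalising $\mu_{k,j}:=c\,2^{k}\mu(I_{k,j})^{1/p}$ and $a_{k,j}:=g^{k,j}/\mu_{k,j}$ then reproduces a decomposition of the form \eqref{2A}.

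The main obstacle is the uniform bound $\Vert g^{k}\Vert_{\infty}\leq c\,2^{k}$ needed to make each $a_{k,j}$ a genuine atom. Strictly before $\tau_{k}$ one has $|f_{n}|\leq2^{k}$, but the martingale jumps at the stopping time, and controlling this jump is precisely where boundedness of the Vilenkin group enters: $\sup_{j}m_{j}<\infty$ bounds the martingale difference $f_{\tau_{k}}-f_{\tau_{k}-1}$ and hence caps the jump by $c\,2^{k}$ with $c=c(\sup_{j}m_{j})$. With this in hand, the distribution-function identity
\begin{equation*}
\sum_{k,j}|\mu_{k,j}|^{p}=c^{p}\sum_{k\in\mathbb{Z}}2^{kp}\,\mu\!\left(\{f^{\ast}>2^{k}\}\right)\backsim\int_{G_{m}}(f^{\ast})^{p}\,d\mu=\Vert f\Vert_{H_{p}}^{p}
\end{equation*}
completes the reverse estimate and, combined with the first direction, yields the equivalence $\Vert f\Vert_{H_{p}}\backsim\inf\left(\sum_{k}|\mu_{k}|^{p}\right)^{1/p}$.
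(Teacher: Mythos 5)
Your sufficiency half is correct and is essentially the standard argument: by \eqref{dn2.3} the operator $S_{M_n}$ is exactly conditional expectation with respect to $\digamma_n$, so for a $p$-atom $a$ supported on $I$ the maximal function $a^{\ast}=\sup_n|S_{M_n}a|$ vanishes off $I$ (nested-or-disjoint intervals plus cancellation) and satisfies $\|a^{\ast}\|_\infty\le\mu(I)^{-1/p}$, whence $\|a\|_{H_p}\le 1$, and $p$-subadditivity of $\|\cdot\|_p^p$ gives $\|f\|_{H_p}^p\le\sum_k|\mu_k|^p$. Note also that the paper itself does not prove Theorem W at all; it quotes it from Weisz \cite{We1,We2}, so the only possible comparison is with the proof there.

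The necessity half has a genuine gap, and it sits exactly at the point you flag as ``where boundedness of the Vilenkin group enters''. The claim that $m_*:=\sup_j m_j<\infty$ yields the pointwise bound $|f_{\tau_k}-f_{\tau_k-1}|\le c(m_*)\,2^k$ is false. Take the Walsh case $m\equiv 2$ and $f_j:=0$ for $j\le n$, $f_j:=K r_n$ for $j>n$ with $K$ arbitrarily large: the filtration is regular, $|f_{\tau_k-1}|\le 2^k$, yet the jump at $\tau_k$ equals $K$. Regularity never controls jumps pointwise; what it gives is distributional control: since $f_{n-1}(x)$ is the average of the $m_{n-1}$ values of $f_n$ on the children of $I_{n-1}(x)$, if $|f_n|>(2m_*-1)\lambda$ at a point where $|f_{n-1}|\le\lambda$, then $|f_n|>\lambda$ on some sibling child, so $I_{n-1}(x)$ meets $\{f^{\ast}>\lambda\}$ in measure at least $\mu(I_{n-1}(x))/m_*$. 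Worse, in the same example your telescoping identity $f=\sum_k\bigl(f^{\tau_{k+1}}-f^{\tau_k}\bigr)$ breaks down: every stopped martingale $f^{\tau_k}$ equals $f$ itself (the stop occurs at the jump, which is already the terminal value), so the sum collapses to $0$, not $f$; the lost piece is precisely the uncontrolled first jump. This is why the proof in Weisz does not stop at the first time $|f_n|$ is large: one first constructs a \emph{predictable} majorant, i.e.\ a nondecreasing $\digamma_{n-1}$-measurable sequence $\lambda_{n-1}$ with $|f_n|\le\lambda_{n-1}$ (built from the sibling maxima $\max_{y\in I_{n-1}(x)}|f_n(y)|$), proves $\|\lambda_\infty\|_p\le c\,\|f^{\ast}\|_p$ by integrating the distributional inequality above (this is the only place $m_*<\infty$ is used), and only then runs the stopping-time argument with $\nu_k:=\inf\{n:\lambda_n>2^k\}$; predictability of $\lambda$ makes the stopped martingales uniformly bounded by $2^k$ and makes the $k\to-\infty$ tail of the telescoping sum vanish. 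Without inserting this extra layer your outline cannot be repaired.
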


If $f\in L_{1}\left( G_{m}\right) ,$ then it is easily shown that the
sequence $\left( S_{M_{n}}f:n\in \mathbb{N}\right) $ is a martingale.

If $f=\left( f_{n},n\in \mathbb{N}\right) $ is a martingale, then
Vilenkin-Fourier coefficients are defined in a slightly different manner: 
\begin{equation*}
\widehat{f}\left( i\right) :=\lim_{k\rightarrow \infty
}\int_{G_{m}}f_{k}\left( x\right) \overline{\psi }_{i}\left( x\right) d\mu
\left( x\right) .
\end{equation*}%
Vilenkin-Fourier coefficients of $f\in L_{1}\left( G_{m}\right) $ are the
same as the martingale $\left( S_{M_{n}}f :n\in \mathbb{N}\right) $ obtained
from $f$.

\section{The Main Result}

Our main theorem reads as follows:

\begin{theorem}
\label{theorem4.4} a) Let $0<p\leq 1$ and $\left\{ \alpha _{k}:k\in \mathbb{N%
}\right\} $ be a subsequence of positive natural numbers, such that 
\begin{equation}
\sup_{k\in \mathbb{N}}\rho \left( \alpha _{k}\right) =:\varkappa <\infty .
\label{ak0}
\end{equation}%
Then the maximal operator 
\begin{equation*}
\widetilde{S}^{\ast ,\vartriangle }f:=\sup_{k\in \mathbb{N}}\left\vert
S_{\alpha _{k}}f\right\vert
\end{equation*}%
is bounded from the Hardy space $H_{p}$ to the Lebesgue space $L_{p}.$

b) Let $0<p<1$ and $\left\{ \alpha _{k}:k\in \mathbb{N}\right\} $ be a
subsequence of positive natural numbers satisfying the condition 
\begin{equation}  \label{ak}
\sup_{k\in \mathbb{N}}\rho \left( \alpha _{k}\right) =\infty .
\end{equation}
Then there exists a martingale $f\in H_{p}$ such that 
\begin{equation*}
\sup_{k\in \mathbb{N}}\left\Vert S_{\alpha _{k}}f\right\Vert _{L_{p,\infty
}}=\infty.
\end{equation*}
\end{theorem}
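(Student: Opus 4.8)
The plan is to prove part (a) by the atomic method and part (b) by an explicit gliding--hump construction; both rest on one computation of $S_{\alpha _k}$ applied to an atom. Fix a $p$-atom $a$ with $\mathrm{supp}(a)\subseteq I=I_N$ (by homogeneity of $G_m$ we may take $I=I_N$), so $\int a\,d\mu =0$ and $\left\Vert a\right\Vert _\infty \le M_N^{1/p}$. For $x\in\overline{I_N}$, say $x\in I_s\setminus I_{s+1}$ with $s<N$, and for $t\in I_N$ one has $t_s=0$, whence $r_s(x-t)=r_s(x)$; combining this with \eqref{dn2.3}, \eqref{dn2.2} and $\psi_{\alpha_k}(x-t)=\psi_{\alpha_k}(x)\overline{\psi_{\alpha_k}(t)}$ and carrying the $t$-integration onto the Fourier coefficient gives the identity
\[
S_{\alpha _k}a(x)=\widehat{a}(\alpha _k)\,D_{\alpha _k}(x),\qquad x\in\overline{I_N},
\]
which I expect is the lemma of independent interest announced in the introduction. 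Two consequences are the heart of the matter: (i) if $\left\vert \alpha _k\right\vert <N$ then $\psi_{\alpha _k}$ is constant on $I_N$, so $\widehat{a}(\alpha _k)=0$ and only indices with $\left\vert \alpha _k\right\vert \ge N$ contribute to $\sup_k\left\vert S_{\alpha _k}a\right\vert$ on $\overline{I_N}$; (ii) by \eqref{dn2.3}--\eqref{dn2.2}, $D_{\alpha _k}$ vanishes on $I_s\setminus I_{s+1}$ whenever $s<\left\langle \alpha _k\right\rangle$.

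Now condition \eqref{ak0} enters decisively. For the indices surviving (i) we have $\left\langle \alpha _k\right\rangle =\left\vert \alpha _k\right\vert -\rho(\alpha _k)\ge N-\varkappa$, so by (ii), within $\overline{I_N}$ the function $\sup_k\left\vert S_{\alpha _k}a\right\vert$ is supported on the union of the at most $\varkappa$ shells $I_s\setminus I_{s+1}$ with $N-\varkappa\le s\le N-1$. On each such shell the crude bounds $\left\vert \widehat{a}(\alpha _k)\right\vert \le M_N^{1/p-1}$ and $\left\vert D_{\alpha _k}\right\vert \le cM_{s+1}$ yield, via \eqref{1.1},
\[
\int_{I_s\setminus I_{s+1}}\sup_k\left\vert S_{\alpha _k}a\right\vert ^p\,d\mu\le c\,M_N^{1-p}M_{s+1}^{p}M_s^{-1}\le c(\varkappa ,p),
\]
the last step using $s\ge N-\varkappa$ and the boundedness of $G_m$. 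Summing the at most $\varkappa$ terms gives $\int_{\overline{I_N}}(\widetilde{S}^{\ast ,\vartriangle}a)^p\,d\mu\le c(\varkappa ,p)$, uniformly in the atom. Since $\widetilde{S}^{\ast ,\vartriangle}\le \sup_n\left\vert S_n\cdot\right\vert$ is bounded on $L_2$ (the well-known maximal estimate for bounded Vilenkin systems), which handles the integral over $I_N$ itself by H\"older's inequality, the standard atomic argument based on Theorem W upgrades these atomic estimates to $\widetilde{S}^{\ast ,\vartriangle}\colon H_p\to L_p$. The only nonroutine point is recognizing that \eqref{ak0} confines the kernel to a band of $\varkappa$ scales.

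For part (b) I would use the same identity in reverse together with a gliding hump. As $\sup_k\rho(\alpha _k)=\infty$, extract inductively a subsequence $(\alpha _{k_i})$ with $w_i:=\left\vert \alpha _{k_i}\right\vert$ strictly increasing and $\rho(\alpha _{k_i})\to\infty$ faster than $i/(1-p)$; put $v_i:=\left\langle \alpha _{k_i}\right\rangle$ and take the $p$-atoms
\[
a_i:=M_{w_i}^{1/p}\,r_{w_i}^{(\alpha _{k_i})_{w_i}}\mathbf{1}_{I_{w_i}}
\]
(mean zero because $0<(\alpha _{k_i})_{w_i}<m_{w_i}$). A direct computation gives $\widehat{a_i}(\alpha _{k_i})=M_{w_i}^{1/p-1}$ and shows that the spectrum of $a_i$ lies in the single block $[(\alpha _{k_i})_{w_i}M_{w_i},((\alpha _{k_i})_{w_i}+1)M_{w_i})$; consequently, for each fixed $i$, $S_{\alpha _{k_i}}a_j=0$ when $j>i$ and $S_{\alpha _{k_i}}a_j=a_j$ when $j<i$. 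Setting $\lambda _i=2^{-i/p}$ and $f=\sum_i\lambda _i a_i$ we obtain $f\in H_p$ by Theorem W, and $S_{\alpha _{k_i}}f=\lambda _iS_{\alpha _{k_i}}a_i+\sum_{j<i}\lambda _j a_j$.

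It remains to bound the hump from below on a controlled set. By the identity above, $\left\vert S_{\alpha _{k_i}}a_i(x)\right\vert =M_{w_i}^{1/p-1}\left\vert D_{\alpha _{k_i}}(x)\right\vert$ for $x\in\overline{I_{w_i}}$, and on $I_{v_i}\setminus I_{v_i+1}$ one has $\left\vert D_{\alpha _{k_i}}(x)\right\vert =M_{v_i}\bigl\vert \sum_{u=m_{v_i}-(\alpha _{k_i})_{v_i}}^{m_{v_i}-1}r_{v_i}^{u}(x)\bigr\vert$; a Parseval count over the values of $x_{v_i}$ produces a set $E_i\subseteq I_{v_i}\setminus I_{v_i+1}$ with $\mu(E_i)\ge M_{v_i+1}^{-1}$ on which $\left\vert D_{\alpha _{k_i}}\right\vert \ge cM_{v_i}$. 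Since $\sum_{j<i}\lambda _j a_j$ is a fixed bounded function, taking $w_i$ (hence $M_{w_i}^{1/p-1}$) large enough makes the hump dominate the tail on $E_i$, so that
\[
\left\Vert S_{\alpha _{k_i}}f\right\Vert _{L_{p,\infty}}^{p}\ge c\,|\lambda _i|^{p}\left(M_{w_i}/M_{v_i}\right)^{1-p}\ge c\,2^{-i}\,2^{(1-p)\rho(\alpha _{k_i})}\longrightarrow\infty ,
\]
using $p<1$ and $\rho(\alpha _{k_i})\gg i/(1-p)$. The main obstacle is precisely this bookkeeping: because $L_{p,\infty}$ is only a quasi-norm for $p<1$ the uniform boundedness principle is unavailable, so one must check by hand that $f\in H_p$ while the dominant hump is not cancelled by the (fully recovered) lower-index atoms---which is exactly what the single-block spectrum and the rapid growth of $w_i$ and $\rho(\alpha _{k_i})$ guarantee.
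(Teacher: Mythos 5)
Your proposal is correct in both parts, but it is not the paper's proof, so a comparison is in order. In part a) your skeleton is the paper's (reduce to a $p$-atom $a$ supported on $I_N$, note that only indices with $\left\vert \alpha _{k}\right\vert \geq N$ matter, and use \eqref{ak0} to confine $\sup_{k}\left\vert S_{\alpha _{k}}a\right\vert $ to the at most $\varkappa $ shells $I_{s}\backslash I_{s+1}$, $N-\varkappa \leq s\leq N-1$), but your tools differ at two points. First, your factorization $S_{\alpha _{k}}a\left( x\right) =\widehat{a}\left( \alpha _{k}\right) D_{\alpha _{k}}\left( x\right) $ on $\overline{I_{N}}$ is correct (for $t\in I_{N}$ and $x\notin I_{N}$ one indeed has $D_{\alpha _{k}}\left( x-t\right) =\overline{\psi _{\alpha _{k}}\left( t\right) }D_{\alpha _{k}}\left( x\right) $ by \eqref{dn2.3} and \eqref{dn2.2}), and it replaces the paper's Lemma \ref{tep2}; both yield the same shell bound $c_{p}M_{N}^{1/p-1}M_{s}$, so this difference is cosmetic, though your identity is arguably cleaner --- note, however, that it is not the paper's ``lemma of independent interest'', which is the lower bound of Lemma \ref{dn2.6.2} used in part b). Second, and more substantially, to complete the atomic scheme the paper stays elementary: under \eqref{ak0} it shows $\left\Vert D_{\alpha _{k}}\right\Vert _{1}\leq c\left( \varkappa +1\right) $, hence $\widetilde{S}^{\ast ,\vartriangle }$ is bounded on $L_{\infty }$, and Lemma \ref{lemma2.2} applies verbatim; you instead invoke the Carleson--Gosselin theorem that $\sup_{n}\left\vert S_{n}f\right\vert $ is bounded on $L_{2}\left( G_{m}\right) $ for bounded Vilenkin systems. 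That is a true theorem and the $L_{2}$ variant of the atomic lemma exists in Weisz's books, so your argument is valid, but it rests on a far deeper result than needed, and Lemma \ref{lemma2.2} as stated here requires $L_{\infty }$-boundedness, which your own shell analysis already gives you for free. In part b) your construction is genuinely different: a gliding hump built from single-spectral-block atoms $a_{i}=M_{w_{i}}^{1/p}r_{w_{i}}^{u_{i}}\mathbf{1}_{I_{w_{i}}}$ with fixed geometric coefficients $2^{-i/p}$, where the earlier atoms are reproduced exactly ($S_{\alpha _{k_{i}}}a_{j}=a_{j}$ for $j<i$) and form a bounded tail dominated pointwise on $E_{i}=I_{v_{i}+1}\left( e_{v_{i}}\right) $ by choosing $\rho \left( \alpha _{k_{i}}\right) $ inductively large; the paper instead sums over the whole extracted subsequence with balanced coefficients $\lambda _{k}=m_{\ast }\left( M_{\left\langle n_{k}\right\rangle }/M_{\left\vert n_{k}\right\vert }\right) ^{\left( 1/p-1\right) /2}$ calibrated by \eqref{16} and \eqref{charpsn2}, takes atoms proportional to $D_{M_{\left\vert n_{k}\right\vert +1}}-D_{M_{\left\vert n_{k}\right\vert }}$, splits $S_{n_{k}}f=I+II$, bounds $I$ by part a) and $II$ from below by Lemma \ref{dn2.6.2}, and finishes with a quasi-norm subtraction. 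Your kernel lower bound on $E_{i}$ (the geometric sum with $x_{v_{i}}=1$) is exactly a rederivation of Lemma \ref{dn2.6.2}, so both counterexamples run on the same engine; what your route buys is that membership in $H_{p}$ ($\sum 2^{-i}<\infty $) is decoupled from the divergence mechanism, so no appeal to part a) and no quasi-norm manipulation is needed, at the cost of an inductive, non-explicit choice of the subsequence, whereas the paper's $f$ is given by a single closed formula.
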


\begin{remark}
Since $L_{p}\subset L_{p,\infty }$ part b) means in particular that the
statement in part a) is sharp in a special sense for the case $0<p<1.$
\end{remark}

We also mention the following well-known consequences (for details see e.g.
the books \cite{We1,We2} and \cite{tep7}):

\begin{corollary}[Tephnadze \protect\cite{tep7}]
\label{corollary4.45} Let $0<p\leq 1$ and $f\in H_{p}$. Then the maximal operator 
\begin{equation*}
\sup_{n\in \mathbb{N_{+}}}\left\vert S_{M_{n}+1}f\right\vert
\end{equation*}%
is not bounded from the Hardy space $H_{p}$ to the space $L_{p}.$
\end{corollary}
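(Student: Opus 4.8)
The plan is to realize the operator in the corollary as a special instance of $\widetilde{S}^{\ast ,\vartriangle }$ and then invoke part b) of Theorem \ref{theorem4.4}. Put $\alpha _{k}:=M_{k}+1$. Since $M_{k}+1=1\cdot M_{0}+1\cdot M_{k}$ (recall $M_{0}=1$ and $m_{j}\geq 2$), the only nonzero digits of $\alpha _{k}$ are $(\alpha _{k})_{0}=1$ and $(\alpha _{k})_{k}=1$, so $\left\langle \alpha _{k}\right\rangle =0$ and $\left\vert \alpha _{k}\right\vert =k$. Hence $\rho (\alpha _{k})=\left\vert \alpha _{k}\right\vert -\left\langle \alpha _{k}\right\rangle =k$, and therefore $\sup_{k}\rho (\alpha _{k})=\infty $; that is, $\{\alpha _{k}\}$ satisfies condition \eqref{ak}. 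For this choice $\widetilde{S}^{\ast ,\vartriangle }f=\sup_{k}\left\vert S_{\alpha _{k}}f\right\vert =\sup_{n}\left\vert S_{M_{n}+1}f\right\vert $ is exactly the operator in the corollary.

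First consider the range $0<p<1$. By part b) of Theorem \ref{theorem4.4} there exists a martingale $f\in H_{p}$ with $\sup_{k}\left\Vert S_{\alpha _{k}}f\right\Vert _{L_{p,\infty }}=\infty $. Since $\sup_{n}\left\vert S_{M_{n}+1}f\right\vert \geq \left\vert S_{\alpha _{k}}f\right\vert $ pointwise for every $k$, monotonicity of the $L_{p,\infty }$ quasi-norm gives $\left\Vert \sup_{n}\left\vert S_{M_{n}+1}f\right\vert \right\Vert _{L_{p,\infty }}\geq \sup_{k}\left\Vert S_{\alpha _{k}}f\right\Vert _{L_{p,\infty }}=\infty $. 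As Chebyshev's inequality yields $\left\Vert g\right\Vert _{L_{p,\infty }}\leq \left\Vert g\right\Vert _{p}$, the $L_{p}$-norm is infinite as well, while $\left\Vert f\right\Vert _{H_{p}}<\infty $. Thus no estimate $\left\Vert \sup_{n}\left\vert S_{M_{n}+1}f\right\vert \right\Vert _{p}\leq c\left\Vert f\right\Vert _{H_{p}}$ can hold, which proves unboundedness for $0<p<1$.

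It remains to treat the endpoint $p=1$, where part b) of Theorem \ref{theorem4.4} does not apply. Here I would argue through the sharpness of the logarithmic normalization recalled in the Introduction: the operator $\widetilde{S}_{1}^{\ast }f=\sup_{n}\left\vert S_{n}f\right\vert /\log (n+1)$ is bounded $H_{1}\to L_{1}$, yet $\log (n+1)$ cannot be replaced by any smaller non-decreasing factor. Concretely, I would reproduce the construction of \cite{tep7}: build a martingale $f\in H_{1}$ (a suitably weighted sum of atoms supported on shrinking intervals $I_{n_{k}}$) for which the frequencies $M_{n_{k}}$ force $S_{M_{n_{k}}+1}f$ to grow logarithmically on a set of controlled measure, so that $\left\Vert \sup_{n}\left\vert S_{M_{n}+1}f\right\vert \right\Vert _{1}=\infty $ while $\left\Vert f\right\Vert _{H_{1}}<\infty $.

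The main obstacle is precisely this endpoint construction. Indeed, for $p=1$ the individual operators $S_{M_{n}+1}$ are even \emph{uniformly} bounded on $H_{1}$, since $\sup_{n}\left\Vert D_{M_{n}+1}\right\Vert _{1}\leq \sup_{n}(\left\Vert D_{M_{n}}\right\Vert _{1}+\left\Vert \psi _{M_{n}}\right\Vert _{1})=2<\infty $ by the criterion recalled in the Introduction (here $D_{M_{n}+1}=D_{M_{n}}+\psi _{M_{n}}$ and $\left\Vert D_{M_{n}}\right\Vert _{1}=1$). Hence at the endpoint the divergence can originate only from the supremum over $n$, and not from any single term, so the $p=1$ assertion genuinely requires exhibiting an explicit martingale witnessing the failure of the maximal estimate rather than a soft deduction from Theorem \ref{theorem4.4}.
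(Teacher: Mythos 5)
For $0<p<1$ your argument is precisely the paper's proof: the paper simply records that $\left\vert M_{n}+1\right\vert =n$, $\left\langle M_{n}+1\right\rangle =0$, $\rho \left( M_{n}+1\right) =n$, so that condition \eqref{ak} holds, and then invokes part b) of Theorem \ref{theorem4.4}; your additional lines (pointwise monotonicity of the supremum, and $\left\Vert g\right\Vert _{L_{p,\infty }}\leq \left\Vert g\right\Vert _{p}$) only make explicit the routine deduction the paper leaves unstated. On that range the proposal is correct and coincides with the paper.

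The endpoint $p=1$, however, is a genuine gap in your proposal, and it cannot be filled, because the assertion is false there. Push your own observation $D_{M_{n}+1}=D_{M_{n}}+\psi _{M_{n}}$ one step further: for every martingale $f$,
\begin{equation*}
S_{M_{n}+1}f=S_{M_{n}}f+\widehat{f}\left( M_{n}\right) \psi _{M_{n}},\qquad \left\vert \psi _{M_{n}}\right\vert \equiv 1,
\end{equation*}
and $\left\vert \widehat{f}\left( M_{n}\right) \right\vert =\lim_{k\rightarrow \infty }\left\vert \int_{G_{m}}f_{k}\overline{\psi }_{M_{n}}d\mu \right\vert \leq \sup_{k}\left\Vert f_{k}\right\Vert _{1}\leq \left\Vert f^{\ast }\right\Vert _{1}=\left\Vert f\right\Vert _{H_{1}}$. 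Hence pointwise $\sup_{n}\left\vert S_{M_{n}+1}f\right\vert \leq {S}^{\#}f+\left\Vert f\right\Vert _{H_{1}}$, and since ${S}^{\#}$ is bounded from $H_{1}$ to $L_{1}$ (Corollary \ref{corollary4.43}) and $\mu \left( G_{m}\right) =1$, integration yields $\left\Vert \sup_{n}\left\vert S_{M_{n}+1}f\right\vert \right\Vert _{1}\leq \left( c+1\right) \left\Vert f\right\Vert _{H_{1}}$: the restricted maximal operator \emph{is} bounded from $H_{1}$ to $L_{1}$. Consequently no construction, in particular not the one from \cite{tep7} that you propose to reproduce, can exhibit divergence on $H_{1}$; the sharpness of the logarithmic normalization in \cite{tep7} concerns the supremum over \emph{all} $n$ and is produced by integers with many nonzero digits (large $\rho$), not by the sequence $M_{n}+1$, whose Lebesgue constants you yourself showed are at most $2$. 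Note that the paper is in the same position: its proof of the Corollary invokes only part b) of Theorem \ref{theorem4.4}, which is stated for $0<p<1$, and the result recalled from \cite{tep7} in the Introduction about $S_{M_{n}+1}$ is likewise only for $0<p<1$. So the stated range $0<p\leq 1$ of the Corollary overreaches, and the statement that is actually proved, both by your argument and by the paper's, is the one for $0<p<1$.
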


In fact, we only have to notice that%
\begin{equation*}
\left\vert M_{n}+1\right\vert =n,\quad\left\langle M_{n}+1\right\rangle
=0,\quad \rho \left( M_{n}+1\right) =n.
\end{equation*}%
The second part of Theorem \ref{theorem4.4} implies our Corollary.

\begin{corollary}
\label{corollary4.44} Let $p>0$ and $f\in H_{p}$. Then the maximal operator 
\begin{equation*}
\sup_{n\in \mathbb{N_{+}}}\left\vert S_{M_{n}+M_{n-1}}f\right\vert
\end{equation*}%
is bounded from the Hardy space $H_{p}$ to the space $L_{p}.$
\end{corollary}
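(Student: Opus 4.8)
The plan is to reduce the statement to part a) of Theorem \ref{theorem4.4} in the range $0<p\le 1$, and to a direct martingale estimate in the range $1<p<\infty$; the decisive computation in both cases is that of $\rho\left(M_{n}+M_{n-1}\right)$. First I would expand $N:=M_{n}+M_{n-1}$ in the generalized number system. Since $M_{n-1}$ and $M_{n}$ occupy distinct positions and $m_{j}\ge 2$, there is no carrying, so $N$ has exactly the two nonzero digits $N_{n-1}=N_{n}=1$. Hence $\left\langle N\right\rangle =n-1$ and $\left\vert N\right\vert =n$, which gives
\begin{equation*}
\rho\left(M_{n}+M_{n-1}\right)=\left\vert N\right\vert -\left\langle N\right\rangle =1\qquad\left(n\in\mathbb{N}_{+}\right).
\end{equation*}
Consequently, setting $\alpha_{k}:=M_{k+1}+M_{k}$, the subsequence $\left\{M_{n}+M_{n-1}\right\}$ satisfies $\sup_{k}\rho\left(\alpha_{k}\right)=1<\infty$, i.e. condition \eqref{ak0} holds with $\varkappa=1$.

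For $0<p\le 1$ the claim is then immediate: part a) of Theorem \ref{theorem4.4} applied to $\left\{\alpha_{k}\right\}$ shows that $\sup_{k}\left\vert S_{\alpha_{k}}f\right\vert=\sup_{n}\left\vert S_{M_{n}+M_{n-1}}f\right\vert$ is bounded from $H_{p}$ to $L_{p}$.

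For $1<p<\infty$ I would argue directly, using $H_{p}=L_{p}$ and an elementary splitting of the partial sum. For $M_{n}\le k<M_{n}+M_{n-1}$ one writes $k=M_{n}+r$ with $0\le r<M_{n-1}$, so that $\psi_{k}=\psi_{M_{n}}\psi_{r}$ and $\widehat{\,\overline{\psi_{M_{n}}}f\,}\left(r\right)=\widehat{f}\left(M_{n}+r\right)$; summing over $r$ yields
\begin{equation*}
S_{M_{n}+M_{n-1}}f=S_{M_{n}}f+\psi_{M_{n}}\,S_{M_{n-1}}\!\left(\overline{\psi_{M_{n}}}f\right).
\end{equation*}
Passing to absolute values, using $\left\vert\psi_{M_{n}}\right\vert=1$ and the conditional-expectation bound $\left\vert S_{M_{n-1}}g\left(x\right)\right\vert\le\frac{1}{\mu\left(I_{n-1}\left(x\right)\right)}\int_{I_{n-1}\left(x\right)}\left\vert g\right\vert d\mu$, I obtain $\sup_{n}\left\vert S_{M_{n}+M_{n-1}}f\right\vert\le f^{\ast}+\left(\left\vert f\right\vert\right)^{\ast}$, where both maximal functions are bounded on $L_{p}$ for $p>1$ by Doob's inequality. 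Since $\left\Vert f^{\ast}\right\Vert_{p}\sim\left\Vert f\right\Vert_{H_{p}}$ in this range, the boundedness follows.

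The genuinely substantive point is thus the range $p>1$, which falls outside the scope of Theorem \ref{theorem4.4}a) and must be treated by the displayed identity together with the (classical) martingale maximal inequality rather than by the atomic decomposition; the reduction for $0<p\le 1$ is routine once $\rho\left(M_{n}+M_{n-1}\right)=1$ has been recorded.
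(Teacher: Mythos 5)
Your proposal is correct, and its first half is exactly the paper's argument: the paper likewise records $\left\vert M_{n}+M_{n-1}\right\vert =n$, $\left\langle M_{n}+M_{n-1}\right\rangle =n-1$, hence $\rho \left( M_{n}+M_{n-1}\right) =1$, so that condition \eqref{ak0} holds with $\varkappa =1$, and then cites Theorem \ref{theorem4.4} (the paper says ``the second part,'' but that is evidently a slip, since part b) is the negative result; part a) is what is needed, as you correctly use). Where you genuinely diverge is the range $p>1$: the paper's one-line proof stops at the citation even though part a) of Theorem \ref{theorem4.4} is stated only for $0<p\leq 1$, while the corollary claims all $p>0$, so strictly speaking the paper leaves the case $p>1$ to the reader as classical. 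Your treatment fills this in with a self-contained argument: the identity $S_{M_{n}+M_{n-1}}f=S_{M_{n}}f+\psi _{M_{n}}S_{M_{n-1}}\left( \overline{\psi _{M_{n}}}f\right) $ (which is a correct digit computation, since $M_{n}+r$ with $0\leq r<M_{n-1}$ has no digit at position $n-1$), the conditional-expectation bound $\left\vert S_{M_{n-1}}g\right\vert \leq \left( \left\vert g\right\vert \right) ^{\ast }$, and Doob's inequality together with $H_{p}\backsim L_{p}$ for $p>1$. This is a legitimate and arguably cleaner route; an alternative, closer in spirit to the paper's toolkit, would be interpolation: the maximal operator is sublinear, bounded from $L_{\infty }$ to $L_{\infty }$ (because $\sup_{k}\left\Vert D_{\alpha _{k}}\right\Vert _{1}<\infty $ under \eqref{ak0}, as shown at the start of the proof of Theorem \ref{theorem4.4}), and of weak type $(1,1)$ by Lemma \ref{lemma2.2}, hence bounded on $L_{p}$ for $1<p<\infty $. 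Either way, what your write-up buys over the paper's is that the assertion for $p>1$ is actually proved rather than implicitly deferred to known results.
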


We notice that 
\begin{equation*}
\left\vert M_{n}+M_{n-1}\right\vert =n,\quad \left\langle
M_{n}+M_{n-1}\right\rangle =n-1, \quad \rho \left( M_{n}+M_{n-1}\right) =1.
\end{equation*}%
Thus, the second part of Theorem \ref{theorem4.4} gives again Corollary \ref%
{corollary4.44}.

\begin{corollary}
\label{corollary4.43} Let $p>0$ and $f\in H_{p}$. Then the maximal operator 
\begin{equation*}
{S}^{\#}f:=\sup_{n\in \mathbb{N}}\left\vert S_{M_{n}}f\right\vert
\end{equation*}%
is bounded from the Hardy space $H_{p}$ to the space $L_{p}.$
\end{corollary}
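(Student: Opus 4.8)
The plan is to recognise $S^{\#}$ as the special instance $\alpha_k=M_k$ of the restricted maximal operator treated in Theorem \ref{theorem4.4} a), supplemented by an elementary identity that also covers the range $p>1$. First I would evaluate $\rho$ along this subsequence. In the generalized number system based on $m$ the number $M_n$ has exactly one nonzero digit, namely a $1$ at position $n$; hence $\left\vert M_n\right\vert =\left\langle M_n\right\rangle =n$ and therefore $\rho\left(M_n\right)=0$ for every $n\in\mathbb{N}$. Consequently $\sup_{k\in\mathbb{N}}\rho\left(M_k\right)=0<\infty$, so $\left\{M_k\right\}$ satisfies hypothesis \eqref{ak0} with $\varkappa=0$.

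For $0<p\leq 1$ the conclusion is then immediate: Theorem \ref{theorem4.4} a) applied to $\alpha_k=M_k$ shows that $S^{\#}f=\sup_{k\in\mathbb{N}}\left\vert S_{M_k}f\right\vert=\widetilde{S}^{\ast,\vartriangle}f$ is bounded from $H_p$ to $L_p$.

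It remains to handle $p>1$, which lies outside the scope of Theorem \ref{theorem4.4}. Here I would invoke the explicit Dirichlet kernel \eqref{dn2.3}, $D_{M_n}=M_n\mathbf{1}_{I_n}$, which upon convolution yields
\begin{equation*}
S_{M_n}f\left(x\right)=\frac{1}{\mu\left(I_n\left(x\right)\right)}\int_{I_n\left(x\right)}f\,d\mu=f^{\left(n\right)}\left(x\right),
\end{equation*}
the $n$-th term of the martingale generated by $f$. Thus $S^{\#}f=f^{\ast}$ is precisely the martingale maximal function. Since $H_p=L_p$ with equivalent norms for $p>1$, Doob's maximal inequality $\left\Vert f^{\ast}\right\Vert_p\leq c_p\left\Vert f\right\Vert_p$ finishes the argument.

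I expect there to be no genuine obstacle here, since the identity $S_{M_n}f=f^{\left(n\right)}$ in fact holds for every martingale $f$ and all $p>0$, giving $\left\Vert S^{\#}f\right\Vert_p=\left\Vert f^{\ast}\right\Vert_p=\left\Vert f\right\Vert_{H_p}$ straight from the definition of the Hardy norm. The only point that deserves a little care is justifying $S_{M_n}f=f^{\left(n\right)}$ for a general, not necessarily integrable, martingale; this follows from the martingale property together with the $\digamma_n$-measurability of $\psi_k$ for $k<M_n$, and it is the reason I would still cite Theorem \ref{theorem4.4} a) for $0<p\leq 1$ rather than lean entirely on the identity.
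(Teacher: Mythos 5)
Your proposal is correct, and it is in fact more complete than the paper's own proof. For $0<p\leq 1$ you do exactly what the paper does: observe that $\left\vert M_{n}\right\vert =\left\langle M_{n}\right\rangle =n$, hence $\rho \left( M_{n}\right) =0$, so condition (\ref{ak0}) holds with $\varkappa =0$, and apply Theorem \ref{theorem4.4} a) (the paper writes ``part b)'' here, but this is plainly a typo, as part b) is the negative result; the same slip occurs after Corollary \ref{corollary4.44}). However, that citation only covers $0<p\leq 1$, while the corollary is stated for all $p>0$; the paper's one-line proof silently leaves the range $p>1$ aside, treating it as well known (it is mentioned as such in the introduction, with references to Weisz). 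Your supplementary argument fills precisely this gap: by (\ref{dn2.3}) one has $S_{M_{n}}f\left( x\right) =\frac{1}{\mu \left( I_{n}\left( x\right) \right) }\int_{I_{n}\left( x\right) }f\,d\mu =f^{\left( n\right) }\left( x\right)$, so ${S}^{\#}f=f^{\ast }$ is the martingale maximal function, and Doob's inequality together with $H_{p}=L_{p}$ for $p>1$ concludes. Your closing observation is also right and worth stressing: since the martingale Fourier coefficients satisfy $\widehat{f}\left( i\right) =\int_{G_{m}}f_{n}\overline{\psi }_{i}\,d\mu$ for $i<M_{n}$ (each $\psi _{i}$ with $i<M_{n}$ being $\digamma _{n}$-measurable), the identity $S_{M_{n}}f=f^{\left( n\right) }$ holds for every martingale, whence $\left\Vert {S}^{\#}f\right\Vert _{p}=\left\Vert f^{\ast }\right\Vert _{p}=\left\Vert f\right\Vert _{H_{p}}$ for every $p>0$, with constant $1$, straight from the definition of the $H_{p}$-norm. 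So the identity alone proves the corollary on its whole stated range; invoking Theorem \ref{theorem4.4} a) for $0<p\leq 1$ is harmless but not needed, and your version makes explicit what the paper's proof glosses over.
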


We find that $\left\vert M_{n}\right\vert =\left\langle M_{n}\right\rangle
=n, $ $\rho \left( M_{n}\right) =0$. Using part b) of Theorem \ref%
{theorem4.4}, we immediately get Corollary \ref{corollary4.43}.

Since $S_{n}P=P$ for every $P\in \mathcal{P}$, where $\mathcal{P}$ is the
set of all Vilenkin polynomials. The set $\mathcal{P}$ is dense in the space 
$L_{1}(G_m)$. Combining Lemma \ref{lemma2.2} and part a) of Theorem \ref%
{theorem4.4}, we obtain that under condition (\ref{ak0}) the restricted
maximal operator of partial sums is bounded from the space $L_{1}(G_m)$ to
the space $weak-L_{1}(G_m)$ It follows that

\begin{corollary}
\label{corollaryae8} Let $f\in L_{1}$ and $\left\{ \alpha _{k}:k\in \mathbb{N%
}\right\} $ be a subsequence of positive natural numbers, satisfying
condition (\ref{ak0}). Then 
\begin{equation*}
S_{\alpha _{k}}f\rightarrow f\text{\ a.e. when \ }k\rightarrow \infty .
\end{equation*}
\end{corollary}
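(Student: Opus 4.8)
The plan is to combine the weak type $(1,1)$ estimate for the maximal operator with the classical density (Banach-principle) argument. As indicated just above, combining Lemma~\ref{lemma2.2} with part a) of Theorem~\ref{theorem4.4} yields
\begin{equation*}
\sup_{\lambda>0}\lambda\,\mu\left(\widetilde{S}^{\ast,\vartriangle}f>\lambda\right)\leq c\left\Vert f\right\Vert_{1},\qquad f\in L_{1}(G_{m}),
\end{equation*}
where $\widetilde{S}^{\ast,\vartriangle}f=\sup_{k\in\mathbb{N}}\left\vert S_{\alpha_{k}}f\right\vert$. I would take this weak-type bound as the analytic input and deduce the a.e. convergence from it.

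First I would record two elementary facts. The set $\mathcal{P}$ of Vilenkin polynomials is dense in $L_{1}(G_{m})$; and for each $P\in\mathcal{P}$ one has $S_{n}P=P$ whenever $n$ exceeds the degree of $P$. Since $\left\{\alpha_{k}\right\}$ is a (strictly increasing) subsequence we have $\alpha_{k}\rightarrow\infty$, so for each fixed $P$ it follows that $S_{\alpha_{k}}P=P$ for all large $k$, and in particular $S_{\alpha_{k}}P\rightarrow P$ everywhere on $G_{m}$.

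Next I would run the three-$\varepsilon$ argument. Fix $f\in L_{1}$ and $\varepsilon>0$, and choose $P\in\mathcal{P}$ with $\left\Vert f-P\right\Vert_{1}<\varepsilon$. By sublinearity of the partial sums,
\begin{equation*}
\left\vert S_{\alpha_{k}}f-f\right\vert\leq\left\vert S_{\alpha_{k}}(f-P)\right\vert+\left\vert S_{\alpha_{k}}P-P\right\vert+\left\vert P-f\right\vert,
\end{equation*}
and since the middle term vanishes as $k\rightarrow\infty$, passing to the upper limit gives
\begin{equation*}
\limsup_{k\rightarrow\infty}\left\vert S_{\alpha_{k}}f-f\right\vert\leq\widetilde{S}^{\ast,\vartriangle}(f-P)+\left\vert f-P\right\vert\qquad\text{pointwise.}
\end{equation*}
Applying the weak-type estimate to the first term and Chebyshev's inequality to the second, for every $\lambda>0$ I obtain
\begin{equation*}
\mu\left(\limsup_{k\rightarrow\infty}\left\vert S_{\alpha_{k}}f-f\right\vert>\lambda\right)\leq\frac{2c}{\lambda}\left\Vert f-P\right\Vert_{1}+\frac{2}{\lambda}\left\Vert f-P\right\Vert_{1}\leq\frac{C\varepsilon}{\lambda}.
\end{equation*}

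Since $\varepsilon>0$ was arbitrary, the set where $\limsup_{k}\left\vert S_{\alpha_{k}}f-f\right\vert>\lambda$ is $\mu$-null for each fixed $\lambda>0$; taking the union over $\lambda=1/m$, $m\in\mathbb{N}_{+}$, then shows that $\limsup_{k}\left\vert S_{\alpha_{k}}f-f\right\vert=0$ almost everywhere, which is exactly $S_{\alpha_{k}}f\rightarrow f$ a.e. I expect no real obstacle here: the entire difficulty is concentrated in the weak $(1,1)$ bound, and once Theorem~\ref{theorem4.4}~a) and Lemma~\ref{lemma2.2} deliver it, the convergence follows by this standard extrapolation from the dense class $\mathcal{P}$.
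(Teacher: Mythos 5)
Your proposal is correct and is essentially the paper's own argument: the paper also obtains the weak type $(1,1)$ bound for $\widetilde{S}^{\ast ,\vartriangle }$ by combining Lemma \ref{lemma2.2} with part a) of Theorem \ref{theorem4.4}, and then appeals to the density of the Vilenkin polynomials $\mathcal{P}$ in $L_{1}(G_{m})$ together with the fact that $S_{\alpha _{k}}P=P$ for large $k$. The only difference is that you write out the standard three-$\varepsilon$ (Banach principle) extrapolation in full, whereas the paper leaves it as an implicit routine step.
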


\begin{corollary}
\label{corollaryae9} Let $f\in L_{1}$. Then%
\begin{equation*}
S_{M_{n}}f\rightarrow f\text{ \ a.e. \ when \ }n\rightarrow \infty .
\end{equation*}
\end{corollary}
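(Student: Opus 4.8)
The plan is to recognize that this is simply the special case $\alpha_k=M_k$ of the preceding Corollary \ref{corollaryae8}. Indeed, since $\left\vert M_n\right\vert=\left\langle M_n\right\rangle=n$, we have $\rho(M_n)=0$ for every $n$, so that $\sup_{k\in\mathbb{N}}\rho(M_k)=0<\infty$ and condition \eqref{ak0} is trivially satisfied by the sequence $\{M_k\}$. Hence Corollary \ref{corollaryae8} applies verbatim and yields $S_{M_n}f\to f$ a.e.\ for every $f\in L_1$.

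For completeness I also recall the mechanism behind Corollary \ref{corollaryae8}, since it is exactly the argument one would run directly here. The key input is the weak type $(1,1)$ bound for the restricted maximal operator: combining the remark preceding Corollary \ref{corollaryae8} (the density of Vilenkin polynomials together with part a) of Theorem \ref{theorem4.4}) one knows that
\begin{equation*}
\sup_{\lambda>0}\lambda\,\mu\left(S^{\#}g>\lambda\right)\leq c\left\Vert g\right\Vert_1,\qquad g\in L_1(G_m),
\end{equation*}
where $S^{\#}g=\sup_{n}\left\vert S_{M_n}g\right\vert$. The second input is convergence on the dense subclass $\mathcal{P}$: if $P\in\mathcal{P}$ then $S_{M_n}P=P$ for all $n$ such that $M_n$ exceeds the largest index occurring in $P$, so $S_{M_n}P\to P$ everywhere.

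These two facts combine through the usual oscillation argument. Setting $\sigma f(x):=\limsup_{n\to\infty}\left\vert S_{M_n}f(x)-f(x)\right\vert$ and picking $P\in\mathcal{P}$, the triangle inequality and the eventual identity $S_{M_n}P=P$ give $\sigma f\leq S^{\#}(f-P)+\left\vert f-P\right\vert$ pointwise, whence for every $\lambda>0$
\begin{equation*}
\mu\left(\sigma f>\lambda\right)\leq \mu\left(S^{\#}(f-P)>\tfrac{\lambda}{2}\right)+\mu\left(\left\vert f-P\right\vert>\tfrac{\lambda}{2}\right)\leq \frac{2(c+1)}{\lambda}\left\Vert f-P\right\Vert_1
\end{equation*}
by the weak type bound and Chebyshev's inequality. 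Since $\mathcal{P}$ is dense in $L_1$, the right-hand side is arbitrarily small, forcing $\mu(\sigma f>\lambda)=0$ for all $\lambda>0$ and hence $\sigma f=0$ a.e. There is no genuine obstacle here: the entire content sits in the weak type $(1,1)$ estimate, which is already established, and the remaining steps are the standard density/maximal-function routine.
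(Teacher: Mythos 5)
Your proof is correct and follows the paper's own route: the paper likewise obtains this corollary as the special case $\alpha_k=M_k$ of Corollary \ref{corollaryae8}, using $\left\vert M_n\right\vert=\left\langle M_n\right\rangle=n$ and $\rho(M_n)=0$, with the weak type $(1,1)$ bound from Lemma \ref{lemma2.2} and Theorem \ref{theorem4.4}~a) plus density of $\mathcal{P}$ supplying the convergence. Your explicit write-up of the standard oscillation/density argument is a faithful expansion of what the paper leaves implicit.
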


\section{Lemmas}

First, we note the following well-known result, which was proved in Weisz 
\cite{We1,We2}:

\begin{lemma}
\label{lemma2.2} Suppose that an operator $T$ is sub-linear and, for some $%
0<p\leq 1$%
\begin{equation*}
\int\limits_{\overline{I}}\left\vert T a\right\vert^{p}d\mu \leq c_{p}<\infty
\end{equation*}%
{for every } $p$-atom $a$, where $I$ denotes the support of the atom $a$. If 
$T$ is bounded from $L_{\infty }$ to $L_{\infty },$ then 
\begin{equation*}
\left\Vert Tf\right\Vert _{p}\leq c_{p}\left\Vert f\right\Vert_{H_{p}}.
\end{equation*}%
Moreover, if $p<1,$ then we have weak (1,1) type estimate, i.e. it holds
that 
\begin{equation*}
\lambda \mu \left\{ x\in G_{m}:\ \left\vert Tf\left( x\right) \right\vert
>\lambda \right\} \leq \left\Vert f\right\Vert _{1}
\end{equation*}%
for all $f\in L_{1}.$
\end{lemma}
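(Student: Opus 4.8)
The plan is to combine the atomic characterization of $H_{p}$ from Theorem W with the sub-linearity of $T$ and the two hypotheses, so that the whole estimate reduces to a single uniform bound on the action of $T$ on one $p$-atom. First I would fix $f\in H_{p}$ and an atomic decomposition $f=\sum_{k}\mu _{k}a_{k}$ with $\sum_{k}\left\vert \mu _{k}\right\vert ^{p}\leq 2\left\Vert f\right\Vert _{H_{p}}^{p}$, where each $a_{k}$ is a $p$-atom supported on an interval $I_{k}$. Since $T$ is sub-linear we have $\left\vert Tf\right\vert \leq \sum_{k}\left\vert \mu _{k}\right\vert \left\vert Ta_{k}\right\vert $, and since $0<p\leq 1$ the elementary inequality $\left( \sum_{k}t_{k}\right) ^{p}\leq \sum_{k}t_{k}^{p}$ (valid for $t_{k}\geq 0$) gives
\begin{equation*}
\left\Vert Tf\right\Vert _{p}^{p}=\int_{G_{m}}\left\vert Tf\right\vert ^{p}d\mu \leq \sum_{k}\left\vert \mu _{k}\right\vert ^{p}\int_{G_{m}}\left\vert Ta_{k}\right\vert ^{p}d\mu .
\end{equation*}

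Next I would prove that $\int_{G_{m}}\left\vert Ta\right\vert ^{p}d\mu $ is bounded by a constant independent of the $p$-atom $a$, by splitting $G_{m}$ into $\overline{I}$ and the support $I$. Over $\overline{I}$ the first hypothesis gives at once $\int_{\overline{I}}\left\vert Ta\right\vert ^{p}d\mu \leq c_{p}$. Over $I$ I would use the normalization $\left\Vert a\right\Vert _{\infty }\leq \mu \left( I\right) ^{-1/p}$ of a $p$-atom together with the $L_{\infty }\rightarrow L_{\infty }$ boundedness of $T$, which yields $\left\Vert Ta\right\Vert _{\infty }\leq C\mu \left( I\right) ^{-1/p}$ and therefore
\begin{equation*}
\int_{I}\left\vert Ta\right\vert ^{p}d\mu \leq \left\Vert Ta\right\Vert _{\infty }^{p}\mu \left( I\right) \leq C^{p}\mu \left( I\right) ^{-1}\mu \left( I\right) =C^{p}.
\end{equation*}
Adding the two contributions yields a uniform bound $\int_{G_{m}}\left\vert Ta\right\vert ^{p}d\mu \leq c_{p}+C^{p}$; substituting this into the previous display and taking the infimum over all atomic decompositions of $f$ produces $\left\Vert Tf\right\Vert _{p}\leq c_{p}\left\Vert f\right\Vert _{H_{p}}$, which is the first assertion.

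For the weak $(1,1)$ estimate in the range $p<1$ I would argue by interpolation, combining the just-proved $H_{p}\rightarrow L_{p}$ bound with the given $L_{\infty }\rightarrow L_{\infty }$ bound. Concretely, for $f\in L_{1}$ and $\lambda >0$ I would use a Calder\'on--Zygmund type decomposition $f=g+b$ at height $\lambda $, where $\left\Vert g\right\Vert _{\infty }$ is comparable to $\lambda $ and $b=\sum_{j}b_{j}$ is a sum of mean-zero functions supported on pairwise disjoint intervals $I_{j}$ with $\sum_{j}\mu \left( I_{j}\right) \lesssim \left\Vert f\right\Vert _{1}/\lambda $. The good part $g$ is controlled through the $L_{\infty }$ bound, which keeps $\left\vert Tg\right\vert $ below a fixed multiple of $\lambda $, while the bad part $b$, once each $b_{j}$ is renormalized to a multiple of a $p$-atom, is controlled through the atomic estimate of the first part together with the measure bound on $\bigcup_{j}I_{j}$.

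I expect the weak $(1,1)$ part to be the main obstacle. The strong $H_{p}\rightarrow L_{p}$ inequality is essentially bookkeeping once the uniform per-atom estimate is in hand, but the weak estimate cannot be obtained by summing atomic contributions alone: one must simultaneously control $\mu \left\{ \left\vert Tf\right\vert >\lambda \right\} $ through a single level-$\lambda $ splitting, balancing the $L_{\infty }$ bound on the good part against the $H_{p}$ bound on the bad part, and it is precisely the gap between $L_{1}$ and $H_{1}$ that forces a weak rather than a strong conclusion here.
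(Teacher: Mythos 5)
Your first assertion ($H_p\to L_p$) is proved essentially the way the cited source proves it: the paper itself gives no proof of this lemma (it is quoted from Weisz \cite{We1,We2}), and Weisz's argument is exactly your splitting $\int_{G_m}|Ta|^p = \int_{I}|Ta|^p + \int_{\overline{I}}|Ta|^p$, with the $L_\infty$ bound handling $I$ and the hypothesis handling $\overline{I}$, followed by summation over the atomic decomposition via $(\sum_k t_k)^p\le\sum_k t_k^p$. One point you should at least flag: sub-linearity gives $|T(\sum_k\mu_k a_k)|\le\sum_k|\mu_k||Ta_k|$ only for finite sums, and the decomposition \eqref{2A} converges in the martingale sense, not pointwise or in $L_1$; some countable sub-additivity or continuity of $T$ has to be assumed or verified. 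This is glossed over in the standard treatment too, so it is not a gap specific to your write-up.

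The genuine gap is in your weak $(1,1)$ argument. In any Calder\'on--Zygmund (stopping-time) decomposition $f=g+b$, $b=\sum_j b_j$ with $b_j$ supported on $I_j$ and of mean zero, the bad pieces satisfy only $\|b_j\|_1\lesssim\lambda\,\mu(I_j)$; they are \emph{not} bounded, since the good part must absorb the bounded portion of $f$, so the $b_j$ necessarily carry the unbounded part of $f$ whenever $f\notin L_\infty$. Consequently your step ``once each $b_j$ is renormalized to a multiple of a $p$-atom'' fails: a $p$-atom must satisfy $\|a\|_\infty\le\mu(I)^{-1/p}$, and no scalar normalization of $b_j$ achieves this. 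Since the hypothesis of the lemma controls $T$ only on genuine $p$-atoms, you have no way to estimate $Tb_j$ off $I_j$, and the argument collapses exactly at the point you yourself identified as the main obstacle. A telling symptom is that your sketch never uses $p<1$, while the lemma restricts to that range. The proof behind the paper's citation runs instead through interpolation of martingale Hardy--Lorentz spaces: the first part gives $T\colon H_p\to L_p$ for some $p<1$; together with $T\colon L_\infty\to L_\infty$, Weisz's interpolation theorem yields $T\colon H_{1,\infty}\to L_{1,\infty}$ (here $p<1$ is essential, so that the exponent $1$ lies strictly inside the interpolation segment); finally, Doob's maximal inequality gives the embedding $L_1\hookrightarrow H_{1,\infty}$, i.e. $\sup_{\lambda>0}\lambda\,\mu(f^{\ast}>\lambda)\le c\|f\|_1$, and composing the two maps gives $\lambda\,\mu\{|Tf|>\lambda\}\le c\|f\|_1$. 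So for the weak-type conclusion you should replace the Calder\'on--Zygmund step by this interpolation argument (or by a martingale decomposition of Gundy type, which however again needs more than the per-atom hypothesis to handle its unbounded component).
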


The next Lemma can be found in Tephnadze \cite{tep2}:

\begin{lemma}
\label{tep2} Let $n\in \mathbb{N}$ and $x\in I_{s}\backslash I_{s+1},$ $%
0\leq s\leq N-1$. Then 
\begin{equation*}
\int_{I_{N}}\left\vert D_{n}\left( x-t\right) \right\vert d\mu \left(
t\right) \leq \frac{cM_{s}}{M_{N}}.
\end{equation*}
\end{lemma}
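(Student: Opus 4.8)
The plan is to derive a pointwise bound on $\left\vert D_{n}\left( x-t\right) \right\vert$ that holds uniformly for $t\in I_{N}$, and then integrate. The starting point is a coordinatewise observation about the group subtraction. If $x\in I_{s}\backslash I_{s+1}$, then $x_{0}=\cdots =x_{s-1}=0$ and $x_{s}\neq 0$, while every $t\in I_{N}$ satisfies $t_{0}=\cdots =t_{N-1}=0$. Since $s\leq N-1<N$, the coordinates $0,\ldots ,s$ of $t$ vanish, so $\left( x-t\right) _{k}=x_{k}$ for all $k\leq s$. Consequently $x-t\in I_{s}\backslash I_{s+1}$ for \emph{every} $t\in I_{N}$, and in particular $x-t\in I_{j}$ precisely when $j\leq s$.

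First I would insert this into the representation \eqref{dn2.2}. By \eqref{dn2.3} the factor $D_{M_{j}}\left( x-t\right)$ equals $M_{j}$ when $x-t\in I_{j}$ and vanishes otherwise; by the observation above it therefore vanishes for all $j>s$, so the infinite sum collapses to a finite one:
\[
D_{n}\left( x-t\right) =\psi _{n}\left( x-t\right) \sum_{j=0}^{s}M_{j}\sum_{u=m_{j}-n_{j}}^{m_{j}-1}r_{j}^{u}\left( x-t\right) .
\]
This truncation is the key step and the only genuine insight required; everything afterwards is a crude estimate.

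Next I would bound the right-hand side using $\left\vert \psi _{n}\right\vert =\left\vert r_{j}^{u}\right\vert =1$. The inner sum runs over $n_{j}$ indices, so its modulus is at most $n_{j}\leq m_{j}-1$, which yields the pointwise bound
\[
\left\vert D_{n}\left( x-t\right) \right\vert \leq \sum_{j=0}^{s}M_{j}\left( m_{j}-1\right) <\sum_{j=0}^{s}M_{j}m_{j}=\sum_{j=1}^{s+1}M_{j}.
\]
Since $M_{j+1}=m_{j}M_{j}\geq 2M_{j}$, the sequence $\left( M_{j}\right)$ grows at least geometrically, whence $\sum_{j=1}^{s+1}M_{j}\leq 2M_{s+1}=2m_{s}M_{s}$. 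Here I use the standing assumption that $G_{m}$ is a bounded Vilenkin group, so $m_{s}\leq \sup_{k}m_{k}<\infty$; this gives $\left\vert D_{n}\left( x-t\right) \right\vert \leq cM_{s}$ with $c$ independent of $n$, $s$, $N$ and $t$.

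Finally I would integrate this uniform bound over $I_{N}$, using $\mu \left( I_{N}\right) =1/M_{N}$, to obtain
\[
\int_{I_{N}}\left\vert D_{n}\left( x-t\right) \right\vert d\mu \left( t\right) \leq cM_{s}\,\mu \left( I_{N}\right) =\frac{cM_{s}}{M_{N}},
\]
which is exactly the asserted estimate. I do not expect any serious obstacle: once the geometric identity $x-t\in I_{s}\backslash I_{s+1}$ is noted, the truncation of \eqref{dn2.2} does all the work, and the remaining bounds are the boundedness of the Rademacher factors, the geometric growth of $M_{j}$, and the boundedness of the Vilenkin sequence.
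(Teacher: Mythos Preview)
Your argument is correct. The paper does not actually prove this lemma: it merely quotes it from Tephnadze~\cite{tep2}, so there is no in-paper proof to compare against. Your route---observing that $x-t\in I_{s}\setminus I_{s+1}$ for all $t\in I_{N}$, truncating the representation \eqref{dn2.2} via \eqref{dn2.3}, and then bounding trivially---is precisely the standard derivation of this estimate. One minor simplification: the sum $\sum_{j=0}^{s}M_{j}(m_{j}-1)=\sum_{j=0}^{s}(M_{j+1}-M_{j})$ telescopes to $M_{s+1}-1$, so you get $\left\vert D_{n}(x-t)\right\vert \leq M_{s+1}\leq m_{\ast}M_{s}$ directly, without needing the geometric-series step.
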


We also need the following estimate of independent interest:

\begin{lemma}
\label{dn2.6.2} Let $n\in \mathbb{N}$, $\left\vert n\right\vert \neq
\left\langle n\right\rangle $ and $x\in I_{\left\langle n\right\rangle
+1}\left( e_{\left\langle n\right\rangle }\right)$ where $%
e_{k}:=(0,\dots,0,1,0,\dots)\in G_{m}$ (only the $k$-th coordinate is one,
the others are zero). Then 
\begin{equation*}
\left\vert D_{n}\left( x\right) \right\vert =\left\vert D_{n-M_{\left\vert
n\right\vert }}\left( x\right) \right\vert \geq M_{\left\langle
n\right\rangle }.
\end{equation*}
\end{lemma}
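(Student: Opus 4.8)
The plan is to compute $D_n(x)$ directly from the product formula \eqref{dn2.2}, exploiting that on the set $I_{\langle n\rangle+1}(e_{\langle n\rangle})$ the factors $D_{M_j}$ collapse to a single surviving term. Write $s:=\langle n\rangle$. A point $x\in I_{s+1}(e_s)$ has coordinates $x_0=\dots=x_{s-1}=0$, $x_s=1$, with $x_{s+1},x_{s+2},\dots$ arbitrary. By \eqref{dn2.3} this means $x\in I_j$ for $j\le s$ and $x\notin I_j$ for $j\ge s+1$, so $D_{M_j}(x)=M_j$ when $0\le j\le s$ and $D_{M_j}(x)=0$ otherwise. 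Hence in \eqref{dn2.2} only the indices $j\le s$ contribute.

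First I would observe that the inner sum $\sum_{u=m_j-n_j}^{m_j-1}r_j^u(x)$ is empty (hence zero) whenever $n_j=0$. Since $s=\langle n\rangle$ is the least index with $n_s\ne0$, every term with $j<s$ vanishes, and the only survivor among $0\le j\le s$ is $j=s$. Therefore
\[
D_n(x)=\psi_n(x)\,M_s\sum_{u=m_s-n_s}^{m_s-1}r_s^u(x).
\]
The same computation applies to $n':=n-M_{|n|}$: here the hypothesis $|n|\ne\langle n\rangle$ is essential, because subtracting $M_{|n|}$ only lowers the digit in position $|n|>s$ and leaves the digits in positions $\le s$ unchanged, so $n'_j=n_j$ for $j\le s$ and in particular $n'_s=n_s\ne 0$. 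Running the identical argument yields $D_{n'}(x)=\psi_{n'}(x)\,M_s\sum_{u=m_s-n_s}^{m_s-1}r_s^u(x)$. Since $|\psi_n(x)|=|\psi_{n'}(x)|=1$, the two expressions have the same modulus, which gives $|D_n(x)|=|D_{n-M_{|n|}}(x)|$.

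It remains to bound the common modulus from below. Because $x_s=1$, we have $r_s(x)=\exp(2\pi\imath/m_s)=:\omega$, a primitive $m_s$-th root of unity, and the surviving sum is a geometric sum of $n_s$ consecutive powers. Summing it and using $\omega^{m_s}=1$ together with $|1-\omega^k|=2|\sin(\pi k/m_s)|$, I would obtain
\[
\left|\sum_{u=m_s-n_s}^{m_s-1}r_s^u(x)\right|=\frac{\sin(\pi n_s/m_s)}{\sin(\pi/m_s)}.
\]
Since $1\le n_s\le m_s-1$, the argument $\pi n_s/m_s$ lies in the interval $[\pi/m_s,\pi-\pi/m_s]$, where $\sin$ attains its minimum at the endpoints; as $\sin(\pi-\theta)=\sin\theta$, that minimum equals $\sin(\pi/m_s)$, so the ratio above is $\ge 1$. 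Combining with $D_{M_s}(x)=M_s$ and $|\psi_n(x)|=1$ gives $|D_n(x)|\ge M_s=M_{\langle n\rangle}$, as claimed.

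The main obstacle is bookkeeping rather than depth: one must verify carefully that the collapse in \eqref{dn2.2} to the single index $j=s$ is legitimate for both $n$ and $n'$, which is exactly the point where the condition $|n|\neq\langle n\rangle$ is used, and then evaluate the geometric sum and the elementary sine inequality correctly.
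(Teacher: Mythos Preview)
Your argument is correct and follows essentially the same route as the paper: both reduce the sum in \eqref{dn2.2} to the single surviving index $j=\langle n\rangle$ using \eqref{dn2.3}, then bound the resulting geometric sum via the ratio $\sin(\pi n_s/m_s)/\sin(\pi/m_s)\ge 1$. Your presentation is marginally more direct in that you compute $D_n(x)$ as an exact equality (one term) rather than starting from a triangle-inequality split, but the substance is identical.
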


\begin{proof}
Let $x\in I_{\left\langle n\right\rangle +1}\left( e_{\left\langle
n\right\rangle }\right) $. Since 
\begin{equation*}
n=n_{\left\langle n\right\rangle }M_{\left\langle n\right\rangle
}+\sum_{j=\left\langle n\right\rangle +1}^{\left\vert n\right\vert
-1}n_{j}M_{j}+n_{\left\vert n\right\vert }M_{\left\vert n\right\vert }
\end{equation*}%
and%
\begin{equation*}
n-M_{\left\vert n\right\vert }=n_{\left\langle n\right\rangle
}M_{\left\langle n\right\rangle }+\sum_{j=\left\langle n\right\rangle
+1}^{\left\vert n\right\vert -1}n_{j}M_{j}+\left( n_{\left\vert n\right\vert
}-1\right) M_{\left\vert n\right\vert },
\end{equation*}%
Applying (\ref{dn2.3}) and (\ref{dn2.2}) we can conclude that 
\begin{eqnarray*}
\left\vert D_{n-M_{\left\vert n\right\vert }}\right\vert &\geq &\left\vert
\psi _{n-M_{n}}D_{M_{\left\langle n\right\rangle }}\sum_{s=m_{\left\langle
n\right\rangle }-n_{\left\langle n\right\rangle }}^{m_{\left\langle
n\right\rangle }-1}r_{\left\langle n\right\rangle }^{s}\right\vert
-\left\vert \psi _{n-M_{n}}\sum_{j=\left\langle n\right\rangle
+1}^{\left\vert n\right\vert
}D_{M_{j}}\sum_{s=m_{j}-n_{j}}^{m_{j}-1}r_{j}^{s}\right\vert \\
&=&\left\vert D_{M_{\left\langle n\right\rangle }}\sum_{s=m_{\left\langle
n\right\rangle }-n_{\left\langle n\right\rangle }}^{m_{\left\langle
n\right\rangle }-1}r_{\left\langle n\right\rangle }^{s}\right\vert \\
&=&\left\vert D_{M_{\left\langle n\right\rangle }}r_{\left\langle
n\right\rangle }^{m_{\left\langle n\right\rangle }-n_{\left\langle
n\right\rangle }}\sum_{s=0}^{n_{\left\langle n\right\rangle
}-1}r_{\left\langle n\right\rangle }^{s}\right\vert \\
&=&D_{M_{\left\langle n\right\rangle }}\left\vert
\sum_{s=0}^{n_{\left\langle n\right\rangle }-1}r_{\left\langle
n\right\rangle }^{s}\right\vert .
\end{eqnarray*}

Let $x_{n}=1.$ Then we readily get for $s_n<m_n$ that 
\begin{eqnarray*}
\left\vert \sum_{u=0}^{s_{n}-1}r_{n}^{u}\left( x\right) \right\vert
&=&\left\vert \frac{r_{n}^{s_{n}}\left( x\right) -1}{r_{n}\left( x\right) -1}%
\right\vert \\
&=&\frac{\sin \left( \pi s_{n}x_{n}/m_{n}\right) }{\sin \left( \pi
x_{n}/m_{n}\right) } \\
&=&\frac{\sin \left( \pi s_{n}/m_{n}\right) }{\sin \left( \pi /m_{n}\right) }%
\geq 1.
\end{eqnarray*}

It follows that 
\begin{equation*}
\left\vert D_{n-M_{\left\vert n\right\vert }}\left( x\right) \right\vert
\geq D_{M_{\left\langle n\right\rangle }}\left( x\right) \geq
M_{\left\langle n\right\rangle }.
\end{equation*}

Moreover, by using the same arguments as above it is easily seen that 
\begin{equation*}
\left\vert D_{n}\left( x\right) \right\vert =\left\vert D_{n-M_{\left\vert
n\right\vert }}\left( x\right) \right\vert ,\ \text{ for }x\in
I_{\left\langle n\right\rangle +1}\left( e_{\left\langle n\right\rangle
}\right) ,\ \left\vert n\right\vert \neq \left\langle n\right\rangle ,\ n\in 
\mathbb{N}.
\end{equation*}%
The proof is complete.
\end{proof}

\section{Proof of the main theorem}

\begin{proof}[Proof of Theorem \protect\ref{theorem4.4}]
First, we prove part a). Combining (\ref{dn2.3}) and (\ref{dn2.2}) we easily
conclude that if condition (\ref{ak0}) holds, then 
\begin{eqnarray*}
\left\Vert D_{\alpha _{k}}\right\Vert _{1} &\leq &\overset{\left\vert \alpha
_{k}\right\vert }{\underset{j=\left\langle \alpha _{k}\right\rangle }{\sum }}%
\left\Vert D_{M_{j}}\right\Vert _{1}m_{j} \\
&\leq &c\overset{\left\vert \alpha _{k}\right\vert }{\underset{%
j=\left\langle \alpha _{k}\right\rangle }{\sum }}1=c(\rho \left( \alpha
_{k}\right)+1)\leq c<\infty.
\end{eqnarray*}

It follows that $\widetilde{S}^{\ast,\vartriangle }$ is bounded from $%
L_{\infty }$ to $L_{\infty }.$ By Lemma \ref{lemma2.2} we obtain that the
proof of part a) will be complete if we show that%
\begin{equation*}
\int\limits_{\overline{I_{N}}}\left\vert \widetilde{S}^{\ast ,\vartriangle
}a\right\vert ^{p}d\mu \leq c<\infty
\end{equation*}%
for every $p$-atom $a$ with support $I=I_{N}$. Since $S_{\alpha _{k}}\left(
a\right) =0$ when $\alpha _{k}\leq M_{N},$ we can suppose that $\alpha
_{k}>M_{N}.$ (That is, $|\alpha_k|\geq N$.)

Let $t\in I_{N}$ and $x\in I_{s}\backslash I_{s+1},$ $1\leq s\leq N-1.$  If $\left\langle \alpha _{k}\right\rangle\geq N,$ we get that 
$ s<N \leq \left\langle \alpha _{k}\right\rangle $ and since $ x-t\in I_{s}\backslash I_{s+1}, $ by combining (\ref{dn2.3}) and (\ref{dn2.2}) we obtain that
\begin{equation}  \label{dnsn000}
D_{\alpha_{k}}\left(x-t\right)=0.
\end{equation}
Analogously, by combining again (\ref{dn2.3}) and (\ref{dn2.2}) we can conlude that (\ref{dnsn000}) holds, for $ s<\left\langle \alpha _{k}\right\rangle\leq N-1 $.

It follows that
\begin{equation}  \label{dnsn0}
\left\vert S_{\alpha_{k}}a\left( x\right) \right\vert =0, \ \ \text{either} \ \ \left\langle \alpha _{k}\right\rangle\geq N, \ \ \text{or} \ \ s<\left\langle \alpha _{k}\right\rangle\leq N-1.
\end{equation}

Let $0<p\leq 1$, $t\in I_{N}$ and $x\in I_{s}\backslash I_{s+1}$, $%
\left\langle \alpha _{k}\right\rangle \leq s\leq N-1.$ Applying the fact
that $\left\Vert a\right\Vert_{\infty }\leq M_{N}^{1/p}$ and Lemma \ref{tep2}
we find that 
\begin{eqnarray}  \label{dnsn}
\left\vert S_{\alpha _{k}}\left( a\right) \right\vert &\leq&
M_{N}^{1/p}\int_{I_{N}}\left\vert D_{\alpha _{k}}\left( x-t\right)
\right\vert d\mu \left( t\right) \leq c_{p}M_{N}^{1/p-1}M_{s}.  \notag
\end{eqnarray}%
Let us set $\varrho :=\min_{k\in \mathbb{N}}\left\langle \alpha
_{k}\right\rangle . $ Then, in view of (\ref{dnsn0}) and (\ref{dnsn}) we can
conclude that 
\begin{equation}  \label{dnsnM0}
\left\vert \widetilde{S}^{\ast ,\vartriangle }a\left( x\right) \right\vert
=0,\text{ \ for }\ x\in I_{s}\backslash I_{s+1},\ 0\leq s\leq \varrho
\end{equation}%
and 
\begin{equation}  \label{dnsnM}
\left\vert \widetilde{S}^{\ast ,\vartriangle }a\left( x\right) \right\vert
\leq c_{p}M_{N}^{1/p-1}M_{s},\text{ for \ }x\in I_{s}\backslash I_{s+1},\
\varrho < s\leq N-1.
\end{equation}

By the definition of $\varrho $ there exists at least one index $k_{{0}}\in 
\mathbb{N}_{+}$ such that $\varrho =\left\langle \alpha _{k_{{0}%
}}\right\rangle .$ By using contition (\ref{ak0}) we can conclude that
\begin{eqnarray}  \label{dnsnM1}
N-\varrho &=& N-\left\langle \alpha _{k_{{0}}}\right\rangle \leq \left\vert
\alpha _{k_{{0}}}\right\vert -\left\langle \alpha _{k_{{0}}}\right\rangle 
\notag \\
&\leq &\sup_{k\in \mathbb{N}}\rho \left( \alpha _{k}\right) =\varkappa
<c<\infty.  \notag
\end{eqnarray}

Let us set $m_*:=\sup_{k}m_{k}$.

Let $0<p<1.$ According to (\ref{1.1}) and using (\ref{dnsnM0}), (\ref{dnsnM}%
) and (\ref{dnsnM1}) we obtain that 
\begin{eqnarray*}
\int_{\overline{I_{N}}}\left\vert \widetilde{S}^{\ast ,\vartriangle }a\left(
x\right) \right\vert ^{p}d\mu \left( x\right) &=&\overset{N-1}{\underset{%
s=\varrho +1}{\sum }}\int_{I_{s}\backslash I_{s+1}}\left\vert \widetilde{S}%
^{\ast ,\vartriangle }a\left( x\right) \right\vert ^{p}d\mu \left( x\right)
\\
&\leq &c_{p}M_{N}^{1-p}\overset{N-1}{\underset{s=\varrho +1}{\sum }}\frac{%
M_{s}^{p}}{M_{s}} \\
&=&c_{p}M_{N}^{1-p}\overset{N-1}{\underset{s=\varrho +1}{\sum }}\frac{1}{%
M_{s}^{1-p}} \\
&\leq &\frac{c_{p}M_{N}^{1-p}}{M_{\varrho }^{1-p}} \leq c_p m_*^{\varkappa
(1-p)}\leq c_p<\infty .
\end{eqnarray*}

Let $p=1.$ We combine (\ref{dnsnM0})-(\ref{dnsnM1}) and invoke identity (\ref%
{1.1}) to obtain that 
\begin{eqnarray*}
\int_{\overline{I_{N}}}\left\vert \widetilde{S}^{\ast ,\vartriangle }a\left(
x\right) \right\vert d\mu \left( x\right) &=&\overset{N-1}{\underset{%
s=\varrho +1}{\sum }}\int_{I_{s}\backslash I_{s+1}}\left\vert \widetilde{S}%
^{\ast ,\vartriangle }a\left( x\right) \right\vert d\mu \left( x\right) \\
&\leq &c\overset{N-1}{\underset{s=\varrho +1}{\sum }}\frac{M_{s}}{M_{s}} \\
&=&c\overset{N-1}{\underset{s=\varrho +1}{\sum }}1\leq c\varkappa \leq
c<\infty .
\end{eqnarray*}

The proof of part a) is complete.

Now, we prove the second part of our theorem. Since, 
\begin{equation*}
\frac{M_{\left\vert \alpha_{k}\right\vert }}{M_{\left\langle
\alpha_{k}\right\rangle }}\geq 2^{\rho \left( \alpha _{k}\right) },
\end{equation*}
under condition (\ref{ak}), there exists an increasing subsequence $\left\{
n_{k}:\ k\in \mathbb{N}\right\} \subset \left\{ \alpha_{k}:\ k\in \mathbb{N}%
\right\} $ such that $n_{0}\geq 3$ and 
\begin{equation}  \label{16}
\lim_{k\rightarrow \infty }\frac{M_{\left\vert n_{k}\right\vert }^{\left(
1-p\right) /2}}{M_{\left\langle n_{k}\right\rangle }^{\left( 1-p\right) /2}}%
=\infty
\end{equation}%
and 
\begin{equation}
\sum_{k=0}^{\infty }\frac{M_{\left\langle n_{k}\right\rangle }^{\left(
1-p\right) /2}}{M_{\left\vert n_{k}\right\vert }^{\left( 1-p\right) /2}}%
<c<\infty .  \label{charpsn2}
\end{equation}

Let $f=\left( f_{n}:n\in \mathbb{N}\right) $ be a martingale defined by 
\begin{equation*}
f_{n}:=\sum_{\left\{ k:\ \left\vert n_{k}\right\vert <n\right\} }\lambda
_{k}a_{k},
\end{equation*}%
where 
\begin{equation*}
a_{k}:=\frac{M_{\left\vert n_{k}\right\vert }^{1/p-1}}{m_* }\left(
D_{M_{\left\vert n_{k}\right\vert +1}}-D_{M_{_{\left\vert n_{k}\right\vert
}}}\right)
\end{equation*}%
and 
\begin{equation}  \label{charpsn3}
\lambda _{k}=\frac{m_* M_{\left\langle n_{k}\right\rangle }^{\left(
1/p-1\right) /2}}{M_{\left\vert n_{k}\right\vert }^{\left( 1/p-1\right) /2}}.
\end{equation}
It is easily seen that $a$ is a $p$-atom. Under condition (\ref{charpsn2})
we can conclude that $f\in H_{p}$. (Theorem W immediately yields that $\Vert
f\Vert_{H_p}\leq c_p<\infty$.)

According to (\ref{charpsn3}) we readily see that

\begin{equation*}
\widehat{f}(j) =\left\{ 
\begin{array}{ll}
M_{\left\langle n_{k}\right\rangle }^{\left( 1/p-1\right) /2}M_{\left\vert
n_{k}\right\vert }^{\left( 1/p-1\right) /2}, & \text{\thinspace }j\in
\left\{ M_{\left\vert n_{k}\right\vert },..., M_{\left\vert n_{k}\right\vert
+1}-1\right\} ,\ k\in \mathbb{N}, \\ 
0, & j\notin \bigcup\limits_{k=0}^{\infty }\left\{ M_{\left\vert
n_{k}\right\vert },...,M_{\left\vert n_{k}\right\vert +1}-1\right\} .%
\end{array}%
\right.
\end{equation*}

Since, $M_{\left\vert n_{k}\right\vert }<n_{k}$, we get 
\begin{eqnarray*}
S_{n_{k}}f&=& \sum_{j=0}^{M_{|n_k|}-1}\hat{f}(j)\psi_j
+\sum_{j=M_{|n_k|}}^{n_k-1}\hat{f}(j)\psi_j \\
&=&S_{M_{\left\vert n_{k}\right\vert }}f+M_{\left\langle n_{k}\right\rangle
}^{\left( 1/p-1\right) /2}M_{\left\vert n_{k}\right\vert }^{\left(
1/p-1\right) /2}\psi _{M_{\left\vert n _{k}\right\vert
}}D_{n_{k}-M_{\left\vert n_{k}\right\vert }} \\
&:=&I+II.
\end{eqnarray*}

According to part a) of Theorem \ref{theorem4.4} for $I$ we have that%
\begin{equation*}
\left\Vert I\right\Vert _{L_{p},\infty}^{p}\leq \left\Vert S_{M_{\left\vert
n_{k}\right\vert }}f\right\Vert _{L_{p},\infty}^{p}\leq c_{p}\left\Vert
f\right\Vert _{H_{p}}^{p}\leq c_p<\infty .
\end{equation*}

Moreover, under condition (\ref{ak}) we can conclude that 
\begin{equation*}
\left\langle n_{k}\right\rangle \neq \left\vert n _{k}\right\vert \text{ and 
}\left\langle n_{k}-M_{\left\vert n_{k}\right\vert }\right\rangle
=\left\langle n_{k}\right\rangle .
\end{equation*}

Let $x\in I_{\left\langle n_{k}\right\rangle +1}\left( e_{\left\langle n
_{k}\right\rangle }\right) .$ Applying Lemma \ref{dn2.6.2} we obtain that 
\begin{equation*}
\left\vert D_{n_{k}-M_{\left\vert n_{k}\right\vert }}\right\vert \geq
M_{\left\langle n_{k}\right\rangle }
\end{equation*}%
Thus, we immediately have 
\begin{eqnarray*}
\left\vert II\right\vert &=&M_{\left\langle n _{k}\right\rangle }^{\left(
1/p-1\right) /2}M_{\left\vert n_{k}\right\vert }^{\left( 1/p-1\right)
/2}\left\vert D_{n_{k}-M_{\left\vert n _{k}\right\vert }}\right\vert \\
&\geq &M_{\left\langle n_{k}\right\rangle }^{\left( 1/p+1\right)
/2}M_{\left\vert n_{k}\right\vert }^{\left( 1/p-1\right) /2}.
\end{eqnarray*}

It follows that%
\begin{eqnarray*}
&&\left\Vert II\right\Vert _{L_{p,\infty }}^{p} \\
&\geq &c_{p}\left( M_{\left\langle n_{k}\right\rangle }^{\left( 1/p+1\right)
/2}M_{\left\vert n_{k}\right\vert }^{\left( 1/p-1\right) /2}\right) ^{p}\mu
\left\{ x\in G_{m}:\text{ }\left\vert II\right\vert \geq
c_{p}M_{\left\langle n_{k}\right\rangle }^{\left( 1/p+1\right)
/2}M_{\left\vert n_{k}\right\vert }^{\left( 1/p-1\right) /2}\right\} \\
&\geq &c_{p}M_{\left\vert n_{k}\right\vert }^{\left( 1-p\right)
/2}M_{\left\langle n _{k}\right\rangle }^{\left( 1+p\right) /2}\mu \left\{
I_{\left\langle n_{k}\right\rangle +1}\left( e_{\left\langle
n_{k}\right\rangle }\right) \right\} \geq \frac{c_{p}M_{\left\vert
n_{k}\right\vert }^{\left( 1-p\right) /2}}{M_{\left\langle
n_{k}\right\rangle }^{\left( 1-p\right) /2}}.
\end{eqnarray*}

Hence, for large enough $k$, 
\begin{eqnarray*}
&&\left\Vert S_{n_{k}}f\right\Vert_{L_{p,\infty }}^{p} \\
&\geq &\left\Vert II\right\Vert_{L_{p,\infty }}^{p}-\left\Vert I\right\Vert
_{L_{p,\infty }}^{p}\geq \frac{1}{2}\left\Vert II\right\Vert _{L_{p,\infty
}}^{p} \\
&\geq &\frac{c_{p}M_{\left\vert n_{k}\right\vert }^{\left( 1-p\right) /2}}{%
M_{\left\langle n_{k}\right\rangle }^{\left( 1-p\right) /2}}\rightarrow
\infty ,\ \text{ when }\ k\rightarrow \infty .
\end{eqnarray*}

The proof is complete.
\end{proof}

\end{document}